\documentclass[a4paper,10pt]{article}

\usepackage[utf8]{inputenc}
\usepackage[width=15cm,height=22cm]{geometry}
\usepackage{amsfonts,amsmath,amssymb,amsthm,mathrsfs}
\usepackage[dvipsnames]{xcolor}
\usepackage[hidelinks,pdfusetitle]{hyperref}
\usepackage[capitalise]{cleveref}

\newtheorem{proposition}{Proposition}[section]
\newtheorem{theorem}[proposition]{Theorem}
\newtheorem{definition}[proposition]{Definition}
\newtheorem{corollary}[proposition]{Corollary}

\newtheorem{lemma}[proposition]{Lemma}
\newtheorem{remark}[proposition]{Remark}
\newtheorem{example}[proposition]{Example}
\numberwithin{equation}{section}

\title{Local convergence of Sussmann's infinite product \\ for analytic vector fields}
\author{Jérémy Le Borgne\thanks{Univ Rennes, CNRS, IRMAR - UMR 6625, F-35000 Rennes, France}, Frédéric Marbach\thanks{DMA, École normale supérieure, Université PSL, CNRS, 75005 Paris, France}}

\newcommand{\N}{\mathbb{N}}
\newcommand{\R}{\mathbb{R}}
\newcommand{\B}{\mathcal{B}}

\newcommand{\ad}{\operatorname{ad}}
\newcommand{\Br}{\operatorname{Br}}
\newcommand{\eval}{\textnormal{\textsc{e}}}
\newcommand{\Lyn}{\mathrm{Lyn}(X)}
\newcommand{\dd}{\,\mathrm{d}}
\newcommand{\norm}[1]{\lVert#1\rVert}

\begin{document}

\maketitle

\begin{abstract}
    In 1986, Sussmann proposed an expansion of the Chen series as an infinite product of exponentials of Hall basis elements multiplied by coefficients defined through a simple induction.
    He proved that this product converges locally in time for bilinear control systems associated with matrices or bounded operators.
    
    In this note, we prove that Sussmann's product also converges locally in time for control-affine systems driven by analytic vector fields.
    The difficulty is that one must keep track of the structure of the involved Lie brackets, as the natural decay of the explicit coefficients is not enough to counterbalance the natural growth of the iterated Lie brackets.
\end{abstract}

\section{Introduction}

\subsection{Sussmann's infinite product}

Sussmann's infinite product is a factorization of the Chen series into an ordered product of exponentials indexed by a Hall basis. 
Its interest is twofold: it preserves the Lie structure of the dynamics and its coefficients are given by an explicit induction. 
We recall the construction briefly, referring to \cite[Section~2]{BeauchardLeBorgneMarbach2023} for the basic definitions and more details.

Let $I$ be a finite set, and let $X=\{X_i \mid i \in I\}$ be a finite alphabet of non-commuting indeterminates. 
We denote by $\mathcal{A}(X)$ the free associative algebra of non-commutative polynomials in $X$, and by $\widehat{\mathcal{A}}(X)$ its completion, consisting of formal non-commutative power series. 
Similarly, $\mathcal{L}(X)$ denotes the free Lie algebra generated by $X$, and $\widehat{\mathcal{L}}(X)$ its completed counterpart.

We consider the following differential equation on $\widehat{\mathcal{A}}(X)$ (see e.g.\ \cite[Section~3]{Sussmann1983}):
\begin{equation}
    \label{eq:S}
    \dot{S}(t) = S(t) \left( \sum_{i \in I} u_i(t) X_i \right)
    \quad \text{and} \quad
    S(0) = 1,
\end{equation}
where $u = (u_i)_{i \in I} \in L^1((0,T);\R^I)$ is the input of the system.

The most direct representation of its solution is the Chen series \cite{Chen1954,Chen1957}, obtained by repeated integration of \eqref{eq:S}:
\begin{equation}
    \label{eq:Chen}
    S(t) = \sum_{\omega \in I^*} \left(\int_0^t u_\omega\right) X_\omega
\end{equation}
where $I^*$ is the free monoid on $I$, and, for any $\omega = (i_1, \dots, i_k) \in I^*$, $X_\omega := X_{i_1} \dotsb X_{i_k}$ and 
\begin{equation}
    \label{eq:Chen-coeff}
    \int_0^t u_\omega := \int_{0 < \tau_1 < \dotsb < \tau_k < t} u_{i_1}(\tau_1) \dotsb u_{i_k}(\tau_k),
\end{equation}
with the convention that $X_\emptyset = 1$ and $\int_0^t u_\emptyset = 1$.
In rough paths theory, this object is known as the \emph{signature} of the bounded-variation path $U(t) := (\int_0^t u_i)_{i \in I}$ (see e.g.\ \cite[Ch.\ 2]{LyonsCaruanaLevy2007} or \cite[Exercise 2.1]{FrizHairer2020}).
An advantage of \eqref{eq:Chen} is that the coefficients \eqref{eq:Chen-coeff} are easy to compute.
Its drawback is that it is written in the associative algebra and therefore does not display the Lie structure of the equation.

The Magnus expansion \cite{Magnus1954} gives a complementary point of view. 
Given a basis $\B$ of $\mathcal{L}(X)$, one can prove that there exist coefficients $\zeta_b(t,u) \in \R$ such that
\begin{equation}
    \label{eq:Magnus}
    S(t) = \exp \left( \sum_{b \in \B} \zeta_b(t,u) b \right).
\end{equation}
This representation is Lie-theoretic and much sparser than the Chen series. 
However, one does not know easy formulas for the \emph{coordinates of the first kind} $\zeta_b$.

A classical alternative, introduced by Wei and Norman \cite{WeiNorman1964} in the finite-dimensional setting, is to choose an order on $\B$ and seek a factorization
\begin{equation}
    \label{eq:sussmann-formal}
    S(t)
    =
    \overset{\longleftarrow}{\prod_{b \in \B}}
    \exp(\xi_b(t,u)b),
\end{equation}
where the $\xi_b(t,u) \in \R$ are called \emph{coordinates of the second kind}.
In finite dimension this is a local product of finitely many exponentials.
In the free Lie algebra, the product is infinite. 
Sussmann proved in \cite{Sussmann1986} that, when $\B$ is a particular type of basis of $\mathcal{L}(X)$ called a Hall set (see \cref{def:Hall}), this infinite product is meaningful at the formal level and the coefficients $\xi_b$ are given by a very nice explicit inductive formula (see \cref{def:xi}).

We refer to \cite{BeauchardLeBorgneMarbach2023,Kawski2000,Kawski2011} for more detailed comparisons of the above expansions.

\subsection{Convergence for real-analytic vector fields}

We now pass from formal series to control systems driven by vector fields. 
Let $p \in \R^d$, and let $(f_i)_{i \in I}$ be real-analytic vector fields defined in a neighborhood of $0 \in \R^d$. 
We consider the control-affine system
\begin{equation}
    \label{eq:syst}
    \dot{x}(t) = \sum_{i \in I} u_i(t) f_i(x)
    \quad \text{and} \quad
    x(0) = p,
\end{equation}
where $u = (u_i)_{i \in I} \in L^1((0,T);\R^I)$ is a control. This class includes systems with drift by singling out an index $0 \in I$ and setting $u_0 \equiv 1$. 
Whenever it is well-defined, the unique absolutely continuous solution to \eqref{eq:syst} is denoted by $x(t;u,p)$.

Representation formulas for the nonlinear system \eqref{eq:syst} can be derived from formal formulas for the linear abstract system \eqref{eq:S}; see \cite[Section~4.1]{BeauchardLeBorgneMarbach2023} for a short tutorial.
This technique was introduced in \cite{AgrachevGamkrelidze1979} and is also used for the design of numerical splitting methods (see e.g.\ \cite{CasasChartierMurua2019}).
However, this passage involves convergence issues, which are sometimes delicate.
The purpose of this note is to prove that Sussmann's product does converge locally for real-analytic vector fields.
This gives a positive answer to the question raised in \cite[Open Problem~135]{BeauchardLeBorgneMarbach2023}.

\begin{theorem}
    \label{thm:main}
    Let $I$ be a finite non-empty set.
    Let $\Omega \subset \R^d$ be an open neighborhood of $0$ and $(f_i)_{i \in I} \in (C^\omega(\Omega;\R^d))^I$.
    Let $\B$ be a Hall set on $X = \{ X_i \mid i \in I \}$.
    There exists $\eta > 0$ such that, for any $T > 0$, $u \in L^1((0,T);\R^I)$ and $p \in \R^d$ with $|p| \le \eta$ and $\norm{u}_{L^1} \le \eta$, and for any $t \in [0,T]$,
    \begin{equation}
        \label{eq:thm.main}
        x(t;u,p) = \left(\overset{\longrightarrow}{\prod_{b \in \B}} e^{\xi_b(t,u) f_b}\right)(p),
    \end{equation}
    where $\xi_b(t,u) \in \R$ is a coefficient defined in \cref{def:xi}, $f_b \in C^\omega(\Omega;\R^d)$ is an iterated Lie bracket defined in \cref{def:f_b}, and $e^{\xi_b(t,u) f_b}$ denotes the time-one flow of the autonomous vector field $\xi_b(t,u) f_b$ (which is well-defined under the given smallness assumption).
    The infinite product is defined as a net over finite subsets and converges uniformly on a small ball (see \cref{sec:proof-def}).
\end{theorem}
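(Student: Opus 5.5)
The plan is to establish \eqref{eq:thm.main} first for a suitable class of test functions, namely polynomials or analytic observables, and then read it off coordinatewise. I will write each flow $e^{\xi_b f_b}$ as the exponential of the Lie derivative $\mathcal{L}_{f_b}$ acting on analytic functions. For any finite subset $F\subset\B$, the ordered product $\prod_{b\in F} e^{\xi_b f_b}$ evaluated at $p$ is then obtained by composing the operators $e^{\xi_b \mathcal{L}_{f_b}}$ in the reversed order and evaluating at $p$. The formal identity \eqref{eq:sussmann-formal}, proved by Sussmann in the free algebra, together with the Chen–Fliess representation $\varphi(x(t;u,p))=\sum_\omega (\int_0^t u_\omega)(\mathcal{L}_{f_{\omega}}\varphi)(p)$, shows that the two sides agree at the level of homogeneous components. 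Indeed, truncating the infinite product to the finite set of Hall elements of length at most $N$ reproduces every Chen–Fliess term of degree at most $N$ exactly. What remains is a quantitative convergence statement: the net of finite products converges, uniformly on a small ball, to the same limit.

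Estimates come next. I will use Cauchy estimates on a complex polydisc to bound iterated brackets. If every $f_i$ is holomorphic and bounded by $M$ on a polydisc of radius $r$, then for a bracket of length $k$ the naive bound is $|f_b|\le C^k k!\,M^k r^{-(k-1)}$ on the half-size polydisc, using nested shrinking radii. On the other side, the coefficients satisfy $|\xi_b(t,u)|\le \norm{u}_{L^1}^{|b|}$ divided by something. This is where the difficulty flagged in the abstract arises: $\xi_b$ only decays like $\norm{u}^{k}/(\text{product of multiplicities factorials})$, which is not $1/k!$, so $\sum_b|\xi_b|\,|f_b|$ may diverge. My plan is therefore \emph{not} to bound $f_b$ by $k!$ naively. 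I will instead exploit the Hall structure: a Hall element $b=\ad_{h_1}\cdots\ad_{h_m}(a)$ (its standard right-nested decomposition with $h_1\ge\dots$ ordered) is a nested bracket. Its coefficient satisfies, via \cref{def:xi}, $\xi_b=\frac{1}{m!}$-type expressions (grouping repeated $h$'s), namely $\int \xi_{h}^{\,n}/n!\,\dot\xi_a$. I will prove by induction on the Hall structure a joint bound on $\xi_b\,f_b$. The induction quantity is $|\xi_b(t)|\,\norm{f_b}_{r_b}\le \bigl(C\norm{u}_{L^1}\bigr)^{|b|}$ on a radius $r_b$ that decreases with $|b|$ in a summable way, for instance $r_b=r(1/2+2^{-\text{depth}})$ or $r(1-\sum_j j^{-2}\cdots)$. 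The factorials $1/n!$ in $\xi$ coming from repeated left factors should compensate the $n!$ in the Cauchy estimate for $\ad_h^n$. The key lemma to check is that $\norm{\ad_{f_h}^n f_a}_{r'}\le n!\,(C\norm{f_h}_r/(r-r'))^n\norm{f_a}_r$, which follows by viewing $\ad_{f_h}^n$ as an $n$-th derivative in an auxiliary parameter, i.e.\ by a Cauchy estimate in $s$ on $s\mapsto (e^{s f_h})_* f_a$. This is the main obstacle, and I expect to spend most effort making radii losses depend on $|b|$ and not on depth, so that everything stays within a fixed smaller ball. Since Hall elements of length $k$ number at most $|I|^k$, the bound then yields $\sum_b |\xi_b|\norm{f_b}<\infty$ once $\eta$ is small.

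Finally, I will show that finite ordered products of flows converge as a net. Given the absolute summability, I compare two finite subsets $F\subset F'$ by inserting the extra flows one at a time. Each insertion perturbs the endpoint by at most $C|\xi_b|\norm{f_b}$, via Lipschitz bounds of the remaining flows, which are close to identity on the small ball by Gronwall. This gives a Cauchy net with a uniform limit on $|p|\le\eta$. The limit equals $x(t;u,p)$ because both sides are analytic in $(p,\text{scaling of }u)$. One shows this by replacing $u$ with $\lambda u$: $\xi_b(t,\lambda u)=\lambda^{|b|}\xi_b(t,u)$, and both sides are analytic in $\lambda$ for small $|\lambda|$ by the uniform summability. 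Their Taylor coefficients in $\lambda$ agree by the formal identity and Chen–Fliess, as established in the first step, so they coincide at $\lambda=1$.
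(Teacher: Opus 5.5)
Your architecture (absolute summability of $\norm{\xi_b f_b}$, convergence of the net, identification of the limit) is sound. Identifying the limit through analyticity in the scaling parameter $\lambda$, Chen--Fliess and Sussmann's formal factorization is a legitimate alternative to the paper's Lazard-elimination argument with the auxiliary curves $z_j$. It needs complex-$\lambda$ versions of the estimates, which should be manageable once summability is known.

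The genuine gap is the summability itself. You assert the bound $|\xi_b(t)|\,\norm{f_b}\le (C\norm{u}_{L^1})^{|b|}$, but the mechanism you describe cannot deliver it. Your Cauchy-in-$s$ lemma is fine (it is essentially \cref{lem:lie}), and cancelling the $m!$ of $\ad_a^m$ against the $1/m!$ in $\xi_b$ is the easy part. What remains at each node $b=\ad_a^m(c)$ of the Hall factorization tree is a factor $\rho_{\mathrm{node}}^{-m}$, where $\rho_{\mathrm{node}}$ is the radius lost there. The losses along a root-to-leaf path share a fixed budget, so the product of these factors is superexponential in the depth for any allocation. Your depth-based choices make this worse. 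Even the length-based allocation of \cref{prop:Fb} (a fraction $m/(|b|-1)$ of the remaining budget) leaves a factor $|b|^m$ per node.

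The only compensation comes from the coefficients. It enters through the factors $1/|a|^m$ and $1/|b|$ produced by integrating $\Xi^{k-1}\dot\Xi$ (\cref{prop:Gammab}), which your ``divided by something'' does not track. After all cancellations the per-node residual is $(m+|c|/|a|)^m/m!$. So one must show that $W_b=mW_a+W_c+m\ln(|b|/|a|)-\ln(m!)$ grows at most linearly in $|b|$. The residual is large whenever $|c|\gg|a|$, which occurs in many Hall sets (e.g.\ the historical Hall basis).

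This linear growth is a global property of the Hall set that no node-by-node induction on length can give. Such an induction only yields $W_b\le\ln(|b|!)$, and this bound is attained by suitable Hall sets over an infinite alphabet. Hence finiteness of $X$ must enter essentially, not merely to count brackets of a given length after a per-bracket bound, as in your plan.

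The missing idea is the partition-function argument of \cref{sec:univ-bound}. Set $p_b(\beta)=e^{W_b-\beta|b|}/|b|$. Lagrange inversion gives $\mathrm{P}_{a^*c}(\beta+p_a(\beta))\le p_c(\beta)$ for one Lazard elimination step (\cref{prop:one-step-lazard-contraction}). Bootstrapping along the Lazard elimination sequence from $\mathrm{P}_X(\beta)=|X|e^{-\beta}$ then gives $\mathrm{P}_\B(\beta+|X|e^{-\beta})\le|X|e^{-\beta}$, hence $W_b\le C|b|$ (\cref{cor:linear-bound}).

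Without this or an equivalent global argument, your summability claim remains unproved, and with it both the convergence and the identification step. A direct induction only works for special Hall sets such as Lyndon's, where $|c|\le M|a|$.
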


\begin{remark}
    The directions of the ordered products in \eqref{eq:sussmann-formal} and \eqref{eq:thm.main} are opposite because their is an inversion in the passage from formal series and linear operators to compositions of flows of nonlinear vector fields.
\end{remark}

\begin{example}
    Take $I=\{0,1\}$ and $d=2$.
    Consider the following real-analytic vector fields
    \begin{equation}
        f_0(x):=\left(0,\frac{x_1}{1-x_1}\right)
        \quad \text{and} \quad
        f_1(x):=(1,0).
    \end{equation}
    To illustrate the computations, take controls $u_0(t)=1$ and $u_1(t)=2t$. 
    Then
    \begin{equation}
        \dot x_1(t)=2t,
        \qquad
        \dot x_2(t)=\frac{x_1(t)}{1-x_1(t)}.
    \end{equation}
    Starting from $p=(0,0)$, explicit integration gives, for $0\leq t<1$,
    \begin{equation}
        \label{eq:explicit-ode-ex-t}
        x_1(t)=t^2,
        \qquad
        x_2(t)
        =
        \int_0^t\frac{s^2}{1-s^2}\dd s
        =
        \sum_{k\geq1}\frac{t^{2k+1}}{2k+1}
        = \operatorname{artanh}(t) - t.
    \end{equation}

    Let $\B$ be a length-compatible Hall set with $X_1<X_0$ (as in \cite{Hall1950,Hall1933}, see \cite[Section 6]{BeauchardLeBorgneMarbach2022}), and set
    \begin{equation}
        h_k:=\ad_{X_1}^k(X_0),
        \qquad k\geq1.
    \end{equation}
    Then $h_k\in\B$ for all $k\geq1$, and $X_1 < X_0 < h_1 < h_2 < \dotsb$ up to possible interleaving of Hall brackets of the same length. 
    The relevant Sussmann coefficients (see \cref{def:xi}) are
    \begin{equation}
        \xi_{X_0}(t)=t,
        \qquad
        \xi_{X_1}(t)=t^2,
        \qquad
        \xi_{h_k}(t)
        =
        \frac1{k!}\int_0^t s^{2k}\dd s
        =
        \frac{t^{2k+1}}{(2k+1)k!}.
    \end{equation}
    On the vector-field side, one has
    \begin{equation}
        f_0 = \frac{x_1}{1-x_1} \partial_{x_2}, \qquad
        f_1 = \partial_{x_1}, \qquad 
        f_{h_k}
        = \ad_{f_1}^k(f_0)
        = \frac{k!}{(1-x_1)^{k+1}} \partial_{x_2},
        \qquad k\geq1.
    \end{equation}
    All other Hall brackets give zero vector fields: the vertical fields
    $f_0,f_{h_1},f_{h_2},\ldots$ commute with each other, and bracketing with
    $f_1$ only differentiates in $x_1$.

    Hence, after removing identity factors, the truncation of Sussmann's product at length $N+1$ is
    \begin{equation}
        G_N(t)
        =
        e^{t^2f_1}
        \circ e^{t f_0}
        \circ e^{\xi_{h_1}(t)f_{h_1}}
        \circ\cdots\circ
        e^{\xi_{h_N}(t)f_{h_N}}.
    \end{equation}
    Since the factors $e^{\xi_{h_k}(t)f_{h_k}}$ are applied at $x_1=0$ and
    $f_{h_k}(0)=k!\partial_{x_2}$, while $f_0(0)=0$, we obtain
    \begin{equation}
        G_N(t)(0,0)
        =
        \left(
        t^2,
        \sum_{k=1}^N \frac{t^{2k+1}}{2k+1}
        \right).
    \end{equation}
    Therefore, for $0\leq t<1$,
    \begin{equation}
        \left(
        \overset{\longrightarrow}{\prod_{b\in\B}}
        e^{\xi_b(t,u) f_b}
        \right)(0,0)
        =
        \left(
        t^2,
        \operatorname{artanh}(t) - t
        \right),
    \end{equation}
    which equals \eqref{eq:explicit-ode-ex-t}.
\end{example}

\subsection{Known results and main difficulty}

The elements $b$ of a Hall set $\B$ have a length $|b| \in \N^*$; see \cref{sec:Hall}. For each $n \ge 1$, there are at most $|X|^n$ elements of length $n$.

Sussmann proved \cref{thm:main} in the linear case, namely when $f_i(x)=A_i x$ for matrices $A_i \in \mathcal{M}_d(\R)$, and more generally for bounded operators on a Banach space \cite{Sussmann1986}. 
In that setting the convergence mechanism is straightforward.
If $M_A := \max_{i \in I}\norm{A_i}$, then a direct induction gives
\begin{equation}
    \norm{A_b} \le (2M_A)^{|b|}.
\end{equation}
Moreover, the inductive definition of $\xi_b$ (see \cref{def:xi}) yields
\begin{equation}
    |\xi_b(t,u)| \le \norm{u}_{L^1}^{|b|}.
\end{equation}
Together with the bound on the number of brackets of a fixed length, these estimates imply that
\begin{equation}
    \sum_{b \in \B} \norm{\xi_b(t,u)A_b} < +\infty
\end{equation}
for small enough $\norm{u}_{L^1}$, which entails the convergence of the infinite product (see \cite[Section 5.4.2]{BeauchardLeBorgneMarbach2023}).

This smallness assumption cannot be removed. Sussmann constructed a counterexample, recalled in \cite[Proposition~132]{BeauchardLeBorgneMarbach2023}, showing that convergence may fail for large controls or large times.
This counterexample relies on a Hall set within which there exists a sequence $b_k$ such that $|b_k| \to \infty$ and $\xi_{b_k}(t,u) \ge c_0 (t/t_0)^{|b_k|}$ and matrices such that $\norm{A_{b_k}} \ge c_1^{|b_k|}$.
Hence the products $\xi_b(t,u) A_b$ do not even tend to $0$.

Such an example might hint towards a non-convergence result for real-analytic vector fields (even under appropriate smallness assumptions).
Indeed, for nonlinear vector fields, the associated differential operators are unbounded, so one cannot even hope for a geometric bound for $f_b$.
For example, it is known that the Magnus expansion \eqref{eq:Magnus} does not converge, even locally, in very weak senses, for real-analytic vector fields (see \cite[Section 5.2.3]{BeauchardLeBorgneMarbach2023}).
As we mentioned in \cite[Section 5.4.3]{BeauchardLeBorgneMarbach2023}, the best bound one can hope based solely on the length of the bracket is of the form 
\begin{equation}
    \norm{f_b} \le C^{|b|} |b|!.
\end{equation}

Thus the proof of \cref{thm:main} cannot rely on length alone.
It must exploit the Hall decomposition of each bracket. 
The main difficulty of this paper is to show that the decay of Sussmann's coefficients and the growth of the corresponding analytic Lie brackets compensate each other once the combinatorial structure of the Hall set is kept throughout the estimates (see in particular \cref{prop:Gammab,prop:Fb} and \cref{sec:univ-bound}, which contain the main arguments of this paper).

\subsection{Organization of the paper}

The paper is organized as follows.
\begin{itemize}
    \item In \cref{sec:Hall}, we recall Hall sets and prove estimates for Sussmann's coordinates $\xi_b$. 
    \item In \cref{sec:Lie}, we establish analytic estimates for the iterated Lie brackets~$f_b$. 
    \item The balance between the decaying coefficients and the growing brackets is encoded in \cref{sec:W} by a logarithmic sequence indexed by the Hall set, for which we prove a linear estimate.
    \item In \cref{sec:proof}, we combine these bounds to prove convergence of the infinite product of flows.
\end{itemize}

\section{Hall sets}
\label{sec:Hall}

We recall the definition and basic properties of Hall sets which will be used in the sequel.
We define and estimate the coordinates of the second kind $\xi_b$.

We follow Viennot's convention of \cite{Viennot1978} for Hall sets, but other conventions exist (e.g.\ in \cite{Reutenauer2003}).

\subsection{Definitions}

Let $X$ be a set.
Let $\Br(X)$ denote the free magma over $X$.
Elements of $\Br(X)$ can be represented as binary trees, e.g.\ $(X_0,(X_0,X_1))$, when $X_0, X_1 \in X$.
For $b \in \Br(X)$, $|b| \ge 1$ denotes its length, defined by $|b| = 1$ for $b \in X$, and $|(a,b)| = |a| + |b|$.

\begin{definition}
    \label{def:Hall}
    A \emph{Hall set} is a subset $\B$ of $\Br(X)$ endowed with a total order $<$ such that
    \begin{itemize}
        \item $X \subset \B$,
        \item for all $a, b \in \Br(X)$, $(a, b) \in \B$ iff $a, b \in \B$, $a < b$ and ($b \in X$ or $b = (b',b'')$ with $b' \le a$),
        \item for all $a, b \in \B$ such that $(a,b) \in \B$ then $a < (a,b)$.
    \end{itemize}
\end{definition}

Note that, in \cite{Sussmann1986}, Sussmann used a stronger definition of Hall sets, closer to the original works of \cite{Hall1950,Hall1933}, where one requires that the order is compatible with length (i.e.\ where $a < b \Rightarrow |a| \le |b|$).
This simplifies some arguments, but we proved in \cite{BeauchardLeBorgneMarbach2023} that his infinite product is still well-defined for non-length-compatible Hall sets, as suggested by \cite{Kawski1999}.

\begin{lemma} 
    \label{lem:facto}
    Let $\B$ be a Hall set on $X$.
	For any $b \in \B$, either $|b| = 1$ and $b \in X$, or $|b| > 1$ and there exists a unique tuple $(a,c,m) \in \B \times \B \times \N^*$ such that $b = \ad^m_a(c)$, $a < c$ and either $c \in X$ or $c = (c',c'')$ with $c' < a$.
    Since $|b| = m |a| + |c|$ with $m \ge 1$, one has $|a| < |b|$ and $|c| < |b|$.
\end{lemma}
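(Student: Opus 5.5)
The plan is to construct the tuple by peeling off left factors repeatedly, and then to show uniqueness by reconstructing the tuple from $b$ alone. If $|b|=1$ then $b \in X$ by definition of $\Br(X)$, so assume $|b|>1$. In that case $b=(a,b_1)$ with $a,b_1 \in \B$ and $a<b_1$, by the second axio of \cref{def:Hall}.

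\textbf{Existence.} We iterate on $b_1$ with $a$ held fixed. If $b_1 \in X$, or $b_1=(b_1',b_1'')$ with $b_1' \neq a$, we stop and set $c=b_1$, $m=1$. In the latter case the axiom gives $b_1' \le a$, so $b_1'<a$, which is the required condition on $c$. If instead $b_1=(a,b_2)$, then $b_1\in\B$ gives $b_2\in\B$ and $a<b_2$. We then repeat with $b_2$. Each step strictly decreases the length, so the process terminates after $m \ge 1$ steps. It yields $b=\ad_a^m(c)$ with $c=b_m\in\B$ and $a<c$. Moreover, either $c\in X$ or $c=(c',c'')$ with $c'\le a$ and $c'\neq a$, that is, $c'<a$.

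\textbf{Uniqueness.} Suppose $b=\ad_a^m(c)$ satisfies the stated conditions. Since $m\ge1$, the left factor of $b$ as an element of the free magma is $a$, so $a$ is determined by $b$. The magma is free, so the decomposition of a tree into left and right subtrees is unique. Now compare two representations $\ad_a^m(c)=\ad_a^{m'}(c')$ and assume $m<m'$. Stripping $m$ copies of $a$ gives $c=\ad_a^{m'-m}(c')$. Then $c$ is a bracket whose left factor is $a$, which contradicts the requirement that $c\in X$ or that the left factor of $c$ is $<a$. Hence $m=m'$ and $c=c'$.

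The length identity $|b|=m|a|+|c|$ follows by induction on $m$ from $|(x,y)|=|x|+|y|$. Since $m\ge1$ and lengths are at least $1$, we get $|a|<|b|$ and $|c|<|b|$.

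There is no real obstacle here. The only points needing care are that each intermediate $b_k$ stays in $\B$ and remains larger than $a$, both of which the Hall axiom supplies, and that the stopping rule coincides with the condition "$c'<a$" in the statement.
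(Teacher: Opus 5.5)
Your proof is correct and follows the same route as the paper. The paper's one-line argument is exactly to iterate the decomposition $b=(a,b_1)$ given by the Hall axiom until the right factor is a letter or has a left factor strictly smaller than $a$; you additionally make explicit that this process terminates and that $(a,c,m)$ is unique by freeness of the magma, both of which the paper leaves implicit.
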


\begin{proof}
	This is a direct consequence of \cref{def:Hall}, up to iterating the decomposition until the right factor is either in $X$ or satisfies the strict inequality $c' < a$ (not just $c' \le a$).
\end{proof}

The fact that length is strictly decreasing in \cref{lem:facto} allows to use such a decomposition as a well-founded induction to define functions on a Hall set $\B$ inductively.

This induction process is linked with Lazard elimination \cite{Lazard1960} and the following definition.

\begin{definition} 
    \label{def:Lazard}
    A \emph{Lazard set} is a subset $\B$ of $\Br(X)$, with a total order $<$ and such that, for every $L \in\N^*$, the set $\B_{\le L}=\{b_1 < \dotsb < b_{k+1}\}$ of elements of $\B$ with length at most $L$ satisfies
    \begin{equation} 
        \left\lbrace
        \begin{aligned}
            & b_1 \in Y_0:=X, \\
            & b_2 \in Y_1:=\{ \ad_{b_1}^m(v) ; m \in \N, v \in Y_0\setminus \{b_1\} \}, \\
            & \dots \\
            & b_{k+1} \in Y_k:=\{ \ad_{b_k}^m(v) ; m \in \N, v \in Y_{k-1} \setminus\{b_k\} \}
        \end{aligned}
        \right.
    \end{equation}
    and $\B_{\le L} \cap Y_k = \{ b_{k+1} \}$ (or, equivalently, $\B_{\le L} \cap Y_{k+1} = \emptyset$).
\end{definition}

One has the following equivalence \cite[Corollary 1.1]{Viennot1978}.

\begin{lemma}
    \label{lem:Hall=Lazard}
    A totally ordered subset $\B$ of $\Br(X)$ is a Hall set if and only if it is a Lazard set.
\end{lemma}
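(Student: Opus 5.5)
This equivalence is classical (Viennot, Corollary 1.1), so I sketch the standard argument rather than a full proof. The plan is to reduce both notions to the same finite elimination procedure on each truncation $\B_{\le L}$, and then prove the two implications by induction on the number of elements of $\B_{\le L}$.

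\emph{Hall implies Lazard.} Fix $L$ and write $\B_{\le L}=\{b_1<\dotsb<b_{k+1}\}$.
\begin{itemize}
\item First, $b_1\in X$. If $b_1=(a,c)$, then the third axiom of \cref{def:Hall} gives $a<b_1$, and $|a|<|b_1|\le L$ places $a$ in $\B_{\le L}$, contradicting minimality.
\item Next, by induction on $j$, the elements $b_{j+1},\dots,b_{k+1}$ all lie in $Y_j$, with one factorization each. Assume every $b\in\B_{\le L}$ with $b>b_j$ is written uniquely as $\ad_{b_j}^m(v)$ with $v\in Y_{j-1}\setminus\{b_j\}$.
\item To pass to $Y_j$, take $b>b_j$ and peel off left factors equal to $b_j$. \cref{lem:facto} controls this: if $b=\ad_a^m(c)$ with $a=b_j$, the remaining $c$ satisfies $c>b_j$, and $c$ lies in $Y_{j-1}$ by the Hall conditions. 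Otherwise $m=0$.
\item The key point is that every left factor $a$ of an element $b\in\B_{\le L}$ satisfies $a<b$ and $|a|<L$. Hence $a$ is some $b_i$, and its elimination has already occurred at step $i$.
\item The condition $\B_{\le L}\cap Y_{k+1}=\emptyset$ then follows, since $Y_{k+1}$ would require an element greater than $b_{k+1}$.
\end{itemize}

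\emph{Lazard implies Hall.} Here I would check the three axioms directly.
\begin{itemize}
\item The inclusion $X\subset\B$ holds because the letters survive in $Y_j$ until they become minimal. Any letter is $\ad^0$ of itself, so it can only leave the $Y_j$ by being selected as some $b_{j+1}$.
\item For a bracket $(a,b)\in\B$, choose $L\ge|(a,b)|$. The element $(a,b)$ first appears in $Y_i$ as $\ad_{b_i}(v)$ with $a=b_i$ and $v=b$.
\item This gives $a<b$ and $a<(a,b)$, because $b$ and $(a,b)$ are chosen after $b_i$.
\item The condition ``$b\in X$ or $b'\le a$'' comes from $b$ belonging to $Y_{i-1}$. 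Such an element is either a letter or has the form $\ad_{b_l}^{m}(\cdot)$ with $l\le i-1$, so $b'=b_l\le a$.
\item For the converse direction of the bracket axiom, take $a,b\in\B$ satisfying the conditions. One checks that $(a,b)$ belongs to $Y_i$ and is not eliminated, hence lies in $\B$.
\end{itemize}

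The main obstacle is the bookkeeping that makes the peeled-off representation unique and shows that the orders agree across different truncation levels $L$. I would handle this by an induction on length that keeps track of the invariant ``$Y_j$ = the set of Hall-type words greater than $b_j$ whose left factors lie in $\{b_1,\dots,b_j\}$.'' Since the paper cites \cite{Viennot1978} for this result, I would defer the fine details there.
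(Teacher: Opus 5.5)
The paper itself only cites Viennot, so a citation would have sufficed. The sketch you give for Hall $\Rightarrow$ Lazard, however, rests on a false induction claim. It is not true that $b_{j+1},\dots,b_{k+1}$ all lie in $Y_j$. Nor is every $b>b_j$ in $\B_{\le L}$ of the form $\ad_{b_j}^m(v)$ with $v\in Y_{j-1}\setminus\{b_j\}$.

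Take the Lyndon Hall set on $x<y$ with $L=3$, so that $\B_{\le 3}=\{x<(x,(x,y))<(x,y)<((x,y),y)<y\}$. Then $b_4=((x,y),y)\notin Y_1=\{\ad_x^m(y) : m\in\N\}$. A bracket enters the elimination only at the step where its left factor is eliminated, here $Y_3$. This is exactly where your ``otherwise $m=0$'' step breaks: when the left factor of $b$ is larger than $b_j$, $b$ is not in $Y_{j-1}$. Your ``key point'' is correct, but it applies only to $b_{j+1}$, whose left factor is $<b_{j+1}$ and hence $\le b_j$.

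The invariant in your last paragraph also fails as stated, because the $Y_j$ are not truncated and the order need not be length-compatible. In the same example, $\ad_x^3(y)$ survives from $Y_1$ into $Y_2$ although $\ad_x^3(y)<b_2=\ad_x^2(y)$ (since $xxxy\prec xxy$). Moreover $(b_2,\ad_x^3(y))\in Y_2\setminus\B$, so $Y_j\not\subset\B$ in general.

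The statement that does propagate is the truncated one
\[
Y_j\cap\Br_{\le L}(X)=\{w\in\B_{\le L} : w>b_j \text{ and } (w\in X \text{ or } w'\le b_j)\},\qquad 0\le j\le k+1,
\]
where $w=(w',w'')$ and the condition $w>b_0$ is void. Since $|w'|<L$, the condition $w'\le b_j$ means $w'\in\{b_1,\dots,b_j\}$. The induction runs as follows.
\begin{itemize}
\item By the third axiom, $b_{j+1}$ lies in the right-hand side, so $b_{j+1}\in Y_j$.
\item At step $j+1$, let $v\ne b_{j+1}$ lie in the level-$j$ set. Then $v>b_{j+1}$ because $v\in\B_{\le L}$; this is where truncation is indispensable. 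The second and third axioms then put every $\ad_{b_{j+1}}^m(v)$ in $\B$, with the level-$(j+1)$ properties.
\item Conversely, peeling off the maximal power of $b_{j+1}$ as in \cref{lem:facto} leaves a remainder $c>b_{j+1}$ with $c\in X$ or $c'<b_{j+1}$, that is, $c'\le b_j$.
\item At $j=k+1$ the right-hand side is empty, so $Y_{k+1}\cap\Br_{\le L}(X)=\emptyset$.
\end{itemize}

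This last condition is strictly more than the $\B_{\le L}\cap Y_{k+1}=\emptyset$ you derive, and it is what your converse direction actually uses. Your arguments that a letter ``can only leave by being selected'' and that $(a,b)\in Y_i$ ends up in $\B$ both need that no bracket of length $\le L$ survives in $Y_{k+1}$. The condition $\B_{\le L}\cap Y_{k+1}=\emptyset$ alone is satisfied by $\B=X$, which is not a Hall set.

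Read with Viennot's exhaustion condition, your Lazard $\Rightarrow$ Hall half is essentially right, modulo bookkeeping: one has $(a,b)=\ad_{b_i}^m(v_0)$ with $b=\ad_{b_i}^{m-1}(v_0)\in Y_i$. The real difficulty is the truncation above, not agreement of orders across different $L$, since $\B$ carries a single fixed order.
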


The main interest of Hall sets or Lazard sets is that they allow to construct bases of $\mathcal{L}(X)$.
Indeed, there is a natural map $\eval : \Br(X) \to \mathcal{L}(X)$ defined by induction by $\eval(x) := x$ for $x \in X$ and $\eval((a,b)) := [\eval(a),\eval(b)]$.
This leads to the following result \cite[Prop.\ 1.1 and Theorem 1.1]{Viennot1978}.

\begin{proposition}
    Let $\B$ be a Hall set on $X$.
    Then $\eval(\B)$ is a basis of $\mathcal{L}(X)$.
\end{proposition}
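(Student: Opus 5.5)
The plan is to prove the two halves separately: first that $\eval(\B)$ spans $\mathcal{L}(X)$, then that it is linearly independent. Both halves use the Lazard characterization of \cref{lem:Hall=Lazard}, together with Lazard elimination. Since $\mathcal{L}(X)$ is graded by length and $\eval$ preserves length, it suffices to work in each fixed degree $n$, restricting to $\B_{\le L}$ with $L \ge n$.

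\textbf{Key tool.} The key algebraic input is Lazard elimination. For a set $Y$ and $y \in Y$, one has a direct sum decomposition of vector spaces
\begin{equation}
    \mathcal{L}(Y) = \R y \oplus \mathcal{L}(Y'),
    \qquad
    Y' := \{ \ad_y^m(v) ; m \in \N, v \in Y \setminus \{y\} \}.
\end{equation}
Here the Lie subalgebra generated by $Y'$ is free on $Y'$. To prove this, I would construct the Lie algebra $\R y \ltimes \mathcal{L}(Y')$, where $y$ acts by the derivation sending $\ad_y^m(v)$ to $\ad_y^{m+1}(v)$. The universal property of $\mathcal{L}(Y)$ and of $\mathcal{L}(Y')$ then gives mutually inverse morphisms. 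Freeness of $\mathcal{L}(Y')$ can alternatively be checked on enveloping algebras, via $\mathcal{A}(Y) \cong \mathcal{A}(Y') \otimes \R[y]$ as vector spaces.

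\textbf{Spanning and independence.} Iterate the decomposition along the Lazard sequence $b_1 < \dots < b_{k+1}$ of $\B_{\le L}$, with $Y_0 = X$, $Y_j$ as in \cref{def:Lazard}. This gives
\begin{equation}
    \mathcal{L}(X) = \R\, \eval(b_1) \oplus \dots \oplus \R\, \eval(b_k) \oplus \mathcal{L}(Y_k),
\end{equation}
where each $Y_j$ is identified, through $\eval$, with a family of elements of $\mathcal{L}(X)$. The identity $\eval(\ad_{b}^m(v)) = \ad_{\eval(b)}^m(\eval(v))$ is immediate from the definition of $\eval$. Now $\mathcal{L}(Y_k)$ is free on $Y_k$, and $\B_{\le L} \cap Y_k = \{b_{k+1}\}$. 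I would then check that every element of $Y_k$ other than $b_{k+1}$ has length greater than $L$. This holds because the elements of $\B$ are exactly the elements eventually eliminated, and every element of $Y_j$ of length $\le L$ eventually lies in $\B_{\le L}$. Consequently the degree-$\le L$ part of $\mathcal{L}(Y_k)$ is just $\R\,\eval(b_{k+1})$. Hence, in degree $n \le L$, $\mathcal{L}(X)_n$ is the direct sum of the lines $\R\,\eval(b)$ over $b \in \B$ with $|b| = n$. This gives spanning and independence at once. A further point to check is that $\eval$ is injective on $\B$, so that these lines are distinct; this follows from the direct sum itself.

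\textbf{Main obstacle.} I expect the main obstacle to be the length claim about $Y_k$ above: that the only element of length $\le L$ surviving in $Y_k$ is $b_{k+1}$. The definition of a Lazard set asserts only that $\B_{\le L} \cap Y_k = \{b_{k+1}\}$, not a statement about all of $Y_k$. The argument I would try is an induction on $j$. Elements of $Y_j$ are brackets built from elements of $\B$, and they satisfy the Hall conditions of \cref{def:Hall} relative to the eliminated elements. Hence every $w \in Y_j$ with $|w| \le L$ is itself in $\B$, and therefore in $\B_{\le L}$. Combined with $\B_{\le L} \cap Y_k = \{b_{k+1}\}$, this yields the claim. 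If this direct route is too delicate, I would fall back on a dimension count: compare $\#\{b \in \B : |b| = n\}$ with Witt's formula for $\dim \mathcal{L}(X)_n$, obtained from the Poincaré–Birkhoff–Witt theorem. Spanning plus equal dimension then gives a basis.
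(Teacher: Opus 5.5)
Your proposal is correct. It is the standard Lazard-elimination argument, which is also the route of the result the paper cites from Viennot (Prop.~1.1 and Thm.~1.1: Hall sets are Lazard sets, and iterated elimination $\mathcal{L}(Y)=\R y\oplus\mathcal{L}(Y')$ along $\B_{\le L}$ gives the basis degree by degree); the paper gives no proof of its own.

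The length issue you flag is real, because the paper states the Lazard condition only through $\B_{\le L}$. Your induction settles it once you carry the invariant that every $w\in Y_j$ with $|w|\le L$ lies in $\B$, satisfies $w>b_j$, and is either a letter or has left factor $\le b_j$. The restriction $|w|\le L$ genuinely matters, since $Y_j\not\subset\B$ in general, and with this invariant the Witt-formula fallback is unnecessary.
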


Different choices of orders on $\Br(X)$ lead to different Hall sets, and thus different bases of $\mathcal{L}(X)$, sometimes with very different properties (see for example \cite{BeauchardLeBorgneMarbach2022} where we give many examples of such orders, and study the structure constants of the associated bases).

\subsection{Coordinates of the second kind}

We recall the explicit formula defining coordinates of the second kind indexed by a Hall set $\B$ on a finite set $X$, due to Sussmann in \cite[equation (19)]{Sussmann1986}, and we prove an inductive bound.

\begin{definition}
    \label{def:xi}
    Let $\B$ be a Hall set on $X = \{ X_i \mid i \in I \}$. 
    Let $T > 0$ and $u \in L^1((0,T);\R^I)$.
    We define the associated coordinates of the second kind by induction by
    \begin{equation}
        \xi_{X_i}(t,u) := \int_0^t u_i \quad \text{for } i \in I
        \qquad \text{and} \qquad
        \xi_b(t,u) := \frac{1}{m!} \int_0^t \xi_a^m \dot{\xi}_c
    \end{equation}
    when $b = \ad_a^m(c)$ is given by the unique representation of \cref{lem:facto}.

    For each $b \in \B$, $\xi_b(\cdot,u)$ is absolutely continuous in time and $\dot{\xi}_b(\cdot,u)$ exists almost everywhere.
\end{definition}

These coordinates satisfy the following estimates.

\begin{proposition}
    \label{prop:Gammab}
	Let $T > 0$ and $u \in L^1((0,T);\R^I)$.
    For $t \in [0,T]$, define 
    \begin{equation}
        \label{eq:Xi}
        \Xi(t) := \norm{u}_{L^1(0,t)} := \int_0^t \sum_{i \in I} |u_i|.
    \end{equation}
	For any $b \in \B$, one has
    \begin{align}
        \label{eq:dotxib-bound}
        |\dot{\xi}_b| & \le e^{\Gamma_b} \Xi^{|b|-1} \dot{\Xi}
        \qquad \text{a.e.\ on } (0,T), \\
        \label{eq:xib-bound}
        |\xi_b| & \le \frac{e^{\Gamma_b}}{|b|} \Xi^{|b|}
        \qquad\quad \text{ on } [0,T],
    \end{align}
    where $\Gamma : \B \to \R$ is defined by setting $\Gamma_{X_i} := 0$ for $i \in I$ and
	\begin{equation} 
        \label{eq:def.gamma.b}
		\Gamma_b := m \Gamma_a + \Gamma_c - m \ln |a| - \ln(m!)
	\end{equation}
    when $b = \ad_a^m(c)$ is given by the unique representation of \cref{lem:facto}.
\end{proposition}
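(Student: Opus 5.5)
We argue by well-founded induction on $b \in \B$, following the decomposition of \cref{lem:facto}. For each $b$ we first prove the pointwise derivative bound \eqref{eq:dotxib-bound}. We then deduce \eqref{eq:xib-bound} by integration: since $\xi_b(0)=0$ (for $|b|\ge 2$ this is immediate from \cref{def:xi}, and for $b=X_i$ it is trivial), we have $|\xi_b(t)| \le \int_0^t e^{\Gamma_b}\Xi^{|b|-1}\dot\Xi = \frac{e^{\Gamma_b}}{|b|}\Xi(t)^{|b|}$, because $\Xi$ is absolutely continuous, nondecreasing and $\Xi(0)=0$. It therefore suffices to establish \eqref{eq:dotxib-bound} at each step, using \eqref{eq:xib-bound} for the factors of smaller length.

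\emph{Base case.} For $b = X_i$ we have $\dot\xi_{X_i} = u_i$ a.e., so $|\dot\xi_{X_i}| \le \sum_j |u_j| = \dot\Xi$. This is \eqref{eq:dotxib-bound} with $\Gamma_{X_i}=0$ and $|b|=1$.

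\emph{Inductive step.} Let $|b|>1$ and write $b=\ad_a^m(c)$ as in \cref{lem:facto}, with $|a|,|c|<|b|$. Differentiating \cref{def:xi} gives $\dot\xi_b = \frac{1}{m!}\xi_a^m\dot\xi_c$ a.e. The induction hypothesis gives $|\xi_a| \le \frac{e^{\Gamma_a}}{|a|}\Xi^{|a|}$ (bound \eqref{eq:xib-bound} for $a$) and $|\dot\xi_c| \le e^{\Gamma_c}\Xi^{|c|-1}\dot\Xi$ (bound \eqref{eq:dotxib-bound} for $c$). Hence
\begin{equation*}
|\dot\xi_b| \le \frac{1}{m!}\,\frac{e^{m\Gamma_a}}{|a|^m}\,e^{\Gamma_c}\,\Xi^{m|a|+|c|-1}\dot\Xi = e^{m\Gamma_a+\Gamma_c-m\ln|a|-\ln(m!)}\,\Xi^{|b|-1}\dot\Xi ,
\end{equation*}
which is exactly $e^{\Gamma_b}\Xi^{|b|-1}\dot\Xi$ by \eqref{eq:def.gamma.b} together with $|b| = m|a|+|c|$. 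This closes the induction.

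The only delicate point is the regularity bookkeeping needed for the a.e.\ derivative statements. We need $\xi_b$ to be absolutely continuous with $\dot\xi_b = \frac{1}{m!}\xi_a^m\dot\xi_c$ a.e. This holds because $\xi_a$ is continuous, hence bounded on $[0,T]$, and $\dot\xi_c\in L^1$, so the integrand lies in $L^1$ and the Lebesgue differentiation theorem applies. We also need the chain-rule identity $\frac{\mathrm d}{\mathrm dt}\Xi^{|b|}/|b| = \Xi^{|b|-1}\dot\Xi$ for the absolutely continuous function $\Xi$, which follows since $x\mapsto x^{|b|}$ is $C^1$. Otherwise the argument is a direct computation, and the definition of $\Gamma_b$ is tailored precisely so that the exponents match.
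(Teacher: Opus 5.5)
Your proof is correct and follows the same route as the paper: strong induction along the decomposition of \cref{lem:facto}, bounding $|\dot\xi_b| \le \frac{1}{m!}|\xi_a|^m|\dot\xi_c|$ via the induction hypotheses, then integrating from $\xi_b(0)=0$. Your remarks on absolute continuity and on the chain rule for $\Xi^{|b|}$ spell out regularity bookkeeping that the paper leaves implicit.
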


\begin{proof}
    For each fixed $b \in \B$, \eqref{eq:xib-bound} is obtained by integrating \eqref{eq:dotxib-bound} from the initial condition $\xi_b(0) = 0$.
    For $i \in I$, $\dot{\xi}_{X_i} = u_i$ by \cref{def:xi} so \eqref{eq:dotxib-bound} holds.
    By \cref{def:xi} for $b = \ad^m_a(c)$ and the induction hypothesis, one has
	\begin{equation}
		| \dot{\xi}_b | 
		\le \frac{|\xi_a|^m}{m!} | \dot{\xi}_c |
		\le \frac{1}{m!} \left(\frac{e^{\Gamma_a}}{|a|}\right)^m e^{\Gamma_c} \Xi^{m|a|+|c|-1} \dot{\Xi},
	\end{equation}
	which proves \eqref{eq:dotxib-bound} provided one sets $\Gamma_b$ as in \eqref{eq:def.gamma.b}.
\end{proof}

\section{Estimates of Lie brackets of analytic vector fields}
\label{sec:Lie}

In this section, we prove estimates on Lie brackets of real-analytic vector fields which we will need to prove the convergence of Sussmann's infinite product.
Here, $\Omega$ denotes an open subset of $\R^d$.

\subsection{Analytic norms and elementary estimates}
\label{sec:analytic-def}

Given $r > 0$, we define the space $C^{\omega,r}(\Omega;\R^q)$ as the subset of $C^\infty(\Omega;\R^q)$ for which the following norm is finite
\begin{equation} \label{eq:def.analytic.r}
    \norm{f}_r := \sum_{i = 1}^q \sum_{\alpha \in \N^d} \frac{r^{|\alpha|}}{\alpha!} \sup_{\Omega} |\partial^\alpha f_i|.
\end{equation}
The following standard properties of the norm $\norm{\cdot}_r$ are proved in \cite[Lemmas 70 and 71]{BeauchardLeBorgneMarbach2023}.

\begin{lemma}[Algebra property]
    \label{lem:alg-fg-r}
	For any $r > 0$ and $f, g \in C^{\omega,r}(\Omega;\R)$, one has
	\begin{equation}
		\norm{f g}_r \le \norm{f}_r \norm{g}_r.
	\end{equation}
\end{lemma}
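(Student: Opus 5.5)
The claim is $\norm{fg}_r \le \norm{f}_r \norm{g}_r$ for scalar functions. I will derive it from the Leibniz rule and a multinomial identity, treating each multi-index $\alpha$ separately.

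\emph{Step 1 (Leibniz bound).} For $\alpha \in \N^d$, the multivariate Leibniz formula gives
\[
\partial^\alpha(fg) = \sum_{\beta \le \alpha} \frac{\alpha!}{\beta!\,(\alpha-\beta)!}\, \partial^\beta f\, \partial^{\alpha-\beta} g .
\]
Taking suprema over $\Omega$ and using $\sup|FG| \le \sup|F|\,\sup|G|$, we get
\[
\frac{r^{|\alpha|}}{\alpha!}\sup_\Omega |\partial^\alpha(fg)| \le \sum_{\beta+\gamma=\alpha} \left(\frac{r^{|\beta|}}{\beta!}\sup_\Omega|\partial^\beta f|\right)\left(\frac{r^{|\gamma|}}{\gamma!}\sup_\Omega|\partial^\gamma g|\right).
\]
Here we used $r^{|\alpha|} = r^{|\beta|} r^{|\gamma|}$, and the factorial $\alpha!$ cancels exactly against the binomial coefficient.

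\emph{Step 2 (summation over $\alpha$).} Sum the Step 1 bound over all $\alpha \in \N^d$. All terms are nonnegative, so we may reorder freely (Tonelli for series). The map $(\beta,\gamma) \mapsto \beta+\gamma$ enumerates each pair $(\beta,\gamma) \in \N^d \times \N^d$ exactly once, so the double sum factorizes as a Cauchy product:
\[
\sum_{\alpha}\sum_{\beta+\gamma=\alpha}(\cdots) = \left(\sum_\beta \frac{r^{|\beta|}}{\beta!}\sup|\partial^\beta f|\right)\left(\sum_\gamma \frac{r^{|\gamma|}}{\gamma!}\sup|\partial^\gamma g|\right) = \norm{f}_r\norm{g}_r .
\]
This also shows that $fg \in C^{\omega,r}$; smoothness of $fg$ is immediate.

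The only point that needs care is the justification of this rearrangement. Since every term is nonnegative, it is automatic, and no other obstacle is expected.
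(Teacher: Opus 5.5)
Your proof is correct: the multivariate Leibniz formula, with the exact cancellation $\frac{1}{\alpha!}\binom{\alpha}{\beta} = \frac{1}{\beta!\,(\alpha-\beta)!}$, followed by a Cauchy-product rearrangement of nonnegative series, is the standard argument for this inequality. The paper does not reprove the lemma; it defers to \cite[Lemma 70]{BeauchardLeBorgneMarbach2023}, and your computation is the usual proof of that cited fact.
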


\begin{lemma}[Gradient]          
    \label{lemma:gradient.estimate}
	For any $r>0$, $\rho>0$, $f \in C^{\omega,r+\rho}(\Omega;\R)$ and $1 \le j \le d$,
	\begin{equation}
		\norm{\partial_j f}_r \le \frac{1}{\rho} \norm{f}_{r+\rho}.
	\end{equation}
\end{lemma}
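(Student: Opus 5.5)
The statement to prove is the gradient estimate (\cref{lemma:gradient.estimate}): $\norm{\partial_j f}_r \le \rho^{-1}\norm{f}_{r+\rho}$. The plan is to expand both sides with definition \eqref{eq:def.analytic.r}, reindex, and compare coefficients term by term.

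First, write the left-hand side as
\begin{equation*}
\norm{\partial_j f}_r = \sum_{\alpha\in\N^d} \frac{r^{|\alpha|}}{\alpha!}\sup_\Omega|\partial^{\alpha+e_j} f|,
\end{equation*}
where $e_j$ is the $j$-th unit multi-index. Setting $\beta=\alpha+e_j$ gives a sum over the multi-indices $\beta$ with $\beta_j\ge1$. The general term is $\frac{r^{|\beta|-1}}{(\beta-e_j)!}\sup_\Omega|\partial^\beta f|$, and since $(\beta-e_j)! = \beta!/\beta_j$ this equals $\frac{\beta_j\, r^{|\beta|-1}}{\beta!}\sup_\Omega|\partial^\beta f|$.

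Second, it then suffices to prove the scalar inequality
\begin{equation*}
\beta_j r^{n-1} \le \frac{(r+\rho)^n}{\rho} \quad \text{for } n=|\beta|\ge \beta_j\ge 1.
\end{equation*}
Because $\beta_j\le n$, it is enough to show $n\rho\, r^{n-1}\le (r+\rho)^n$. This holds since $n\rho\, r^{n-1}$ is exactly one term of the binomial expansion of $(r+\rho)^n$, and all terms are nonnegative. Summing over $\beta$ and adding back the nonnegative terms with $\beta_j=0$ gives $\norm{\partial_j f}_r\le\rho^{-1}\norm{f}_{r+\rho}$.

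There is no real obstacle; the only care needed is in the factorial bookkeeping of the reindexing. We also use that all terms are nonnegative, so the series may be rearranged freely, including when the right-hand side is infinite, in which case the bound is trivial. Finiteness of the right-hand side then shows $\partial_j f\in C^{\omega,r}$.
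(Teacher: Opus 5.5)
Your proof is correct: reindexing with $\beta=\alpha+e_j$, using $(\beta-e_j)!=\beta!/\beta_j$, and bounding $\beta_j\rho\, r^{|\beta|-1}\le |\beta|\rho\, r^{|\beta|-1}\le (r+\rho)^{|\beta|}$ by a single term of the binomial expansion is the standard term-by-term argument, and the nonnegativity remarks cover rearrangement and the case of an infinite right-hand side. The paper does not prove this lemma itself but defers to \cite[Lemmas 70 and 71]{BeauchardLeBorgneMarbach2023}, where it is presumably proved by essentially this same coefficient comparison.
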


Given two vector fields $f, g \in C^\infty(\Omega;\R^d)$, we define their Lie bracket $[f,g] \in C^\infty(\Omega;\R^d)$ as
\begin{equation}
    [f,g] := Dg \cdot f - Df \cdot g = \sum_{j = 1}^d f_j \partial_j g - g_j \partial_j f.
\end{equation}
We also define the linear map $\ad_f : g \mapsto [f,g]$, for which we have the following estimate.

\begin{lemma}[Lie brackets]
    \label{lem:lie}
	For any $r>0$, $\rho>0$, $f, g \in C^{\omega,r+\rho}(\Omega;\R^d)$ and $m \in \N^*$,
	\begin{equation} 
        \label{eq:admfg.rdr}
        \norm{\ad^m_{f} (g)}_r \le \left(\frac{2m}{\rho}\right)^m \norm{f}_{r+\rho}^m \norm{g}_{r+\rho}.
	\end{equation}
\end{lemma}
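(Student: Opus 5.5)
We prove \eqref{eq:admfg.rdr} by a Cauchy-type argument that spends the loss of radius $\rho$ in $m$ equal pieces. Set $\delta := \rho/m$ and define, for $k = 0, \dots, m$, the intermediate radii $r_k := r + \rho - k\delta$, so that $r_0 = r+\rho$ and $r_m = r$. Write $g_k := \ad_f^k(g)$. Since the norm $\norm{\cdot}_s$ is nondecreasing in $s$, we have $\norm{f}_{r_k} \le \norm{f}_{r+\rho}$ for every $k$, so all norms of $f$ can be bounded by the outermost one.

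The single-step estimate is the following. For $s > 0$, $\delta > 0$ and $h \in C^{\omega,s+\delta}$,
\begin{equation*}
    \norm{[f,h]}_s \le \frac{2}{\delta}\norm{f}_{s+\delta}\norm{h}_{s+\delta}.
\end{equation*}
To prove it, expand $[f,h] = \sum_j (f_j \partial_j h - h_j \partial_j f)$ componentwise. By \cref{lem:alg-fg-r} and \cref{lemma:gradient.estimate},
\begin{equation*}
    \norm{[f,h]}_s \le \sum_{i,j}\Big( \norm{f_j}_s \norm{\partial_j h_i}_s + \norm{h_j}_s\norm{\partial_j f_i}_s\Big)
    \le \frac{1}{\delta}\Big(\sum_j \norm{f_j}_{s}\Big)\Big(\sum_i \norm{h_i}_{s+\delta}\Big) + \frac1\delta\Big(\sum_j\norm{h_j}_s\Big)\Big(\sum_i\norm{f_i}_{s+\delta}\Big).
\end{equation*}
Each term is at most $\delta^{-1}\norm{f}_{s+\delta}\norm{h}_{s+\delta}$, because the vector norm is the sum of the component norms, as in \eqref{eq:def.analytic.r}. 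The one point that requires care is exactly this summation over components. The $\partial_j$ carries the index $j$ while the component index $i$ is summed separately, so the double sum factorizes into the product of the two vector norms with no extra factor of $d$. This factorization is where the definition of $\norm{\cdot}_r$ as a sum over components matters.

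Iterating the single-step estimate gives
\begin{equation*}
    \norm{g_{k+1}}_{r_{k+1}} \le \frac{2m}{\rho}\norm{f}_{r+\rho}\norm{g_k}_{r_k}.
\end{equation*}
By induction on $k$ we obtain
\begin{equation*}
    \norm{g_m}_r \le \left(\frac{2m}{\rho}\right)^m \norm{f}_{r+\rho}^m\norm{g}_{r+\rho},
\end{equation*}
which is \eqref{eq:admfg.rdr}. Along the way, the induction also shows that each $g_k$ has finite norm at radius $r_k$, so every step is well-defined.

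The factor $m^m$, rather than something like $m!$, comes from the equal splitting of the radius. This is the standard loss, and it is acceptable here because the later Hall-structured estimates compensate for it through the $-\ln(m!)$ and $-m\ln|a|$ terms in $\Gamma_b$.
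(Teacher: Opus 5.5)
Your proof is correct and follows the paper's argument. You prove the one-bracket estimate $\norm{[f,h]}_s \le \frac{2}{\delta}\norm{f}_{s+\delta}\norm{h}_{s+\delta}$ from \cref{lem:alg-fg-r,lemma:gradient.estimate}, with the componentwise sum in \eqref{eq:def.analytic.r} absorbing the dimension, then apply it $m$ times with radius loss $\rho/m$ and conclude by monotonicity of $\norm{\cdot}_r$ in $r$, exactly as the paper does.
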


\begin{proof}
	For $m = 1$, the estimate is a consequence of \cref{lem:alg-fg-r,lemma:gradient.estimate} and the choice of norm~\eqref{eq:def.analytic.r} which allows to sum over the dimensions.
    For $m > 1$, one applies the $m=1$ estimate $m$ times with a radius loss $\rho / m$ at each step. 
    This yields
	\begin{equation}
		\begin{split}
			\norm{\ad^m_{f} (g)}_r 
			& \le 
			\left(\frac{2m}{\rho}\right) \norm{\ad^{m-1}_f g }_{r+\frac{\rho}{m}}
			\norm{f}_{r+ \frac{\rho}{m}} \\
			& \le 
			\left(\frac{2m}{\rho}\right)^m \norm{g}_{r+\rho}
			\prod_{j=1}^m \norm{f}_{r+j \frac{\rho}{m}},
		\end{split}
	\end{equation}
	which concludes the proof because the norm \eqref{eq:def.analytic.r} is non-decreasing with respect to $r$.
\end{proof}

\subsection{Inductive estimates in a Hall set}

In this subsection $I$ is a finite set, and $\B$ is a Hall set over $X := \{ X_i \mid i \in I \}$.

\begin{definition}[Lie brackets $f_b$]
    \label{def:f_b}
    Given vector fields~$f_i \in C^\infty(\Omega;\R^d)$ for $i \in I$, we define their iterated Lie brackets $f_b$ for $b \in \Br(X)$ as follows. 
    By induction on $|b|$, we set $f_{X_i} := f_i$ for $i \in I$ and $f_{(a,b)} := [f_a, f_b]$ for $a,b \in \Br(X)$.
    Equivalently, for $b \in \Br(X)$, $f_b$ is the image of $\eval(b) \in \mathcal{L}(X)$ by the unique homomorphism of Lie algebras from $\mathcal{L}(X)$ to $C^\infty(\Omega;\R^d)$ mapping $X_i$ to $f_i$.
\end{definition}

\begin{proposition}
    \label{prop:Fb}
	Let $r > 0$, $\rho > 0$ and $f_i \in C^{\omega,r+\rho}(\Omega;\R^d)$ for $i \in I$.
	For any $b \in \B$, 
	\begin{equation} \label{eq:fb.Fb}
		\norm{f_b}_r
		\le e^{F_b}
		\left(\frac{2e}{\rho}\right)^{|b|-1}
		M_f^{|b|}
        \quad \text{where} \quad 
        M_f := \max_{i \in I} \norm{f_i}_{r+\rho}
	\end{equation}
	and $F : \B \to \R_+$ is defined by setting $F_b := 0$ for $b \in X$ and
	\begin{equation} \label{eq:def:Fb}
		F_b := m F_a + F_c + m \ln |b|
	\end{equation}
    when $b = \ad_a^m(c)$ is given by the unique representation of \cref{lem:facto}.
\end{proposition}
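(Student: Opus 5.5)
The plan is to prove \eqref{eq:fb.Fb} by well-founded induction on $|b|$, using the decomposition $b=\ad_a^m(c)$ of \cref{lem:facto}. The statement has to hold \emph{for all} $r>0$ and $\rho>0$ simultaneously, because the inductive step applies the hypothesis to $a$ and $c$ at a shifted radius. The base case $b\in X$ is immediate: $F_b=0$, $|b|-1=0$, and $\norm{f_i}_r\le\norm{f_i}_{r+\rho}\le M_f$ since the norm \eqref{eq:def.analytic.r} is non-decreasing in $r$.

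For the inductive step, let $b=\ad_a^m(c)$ and write $n:=|b|=m|a|+|c|$, so that $m<n$. The key choice is how to split the radius budget $\rho$. I would spend a loss $\delta:=m\rho/n$ on the outer $m$ brackets and keep $\rho':=\rho-\delta=\rho(1-m/n)>0$ for the inner brackets. By \cref{lem:lie} applied with radii $r$ and $\delta$,
\begin{equation*}
\norm{f_b}_r\le\Bigl(\frac{2m}{\delta}\Bigr)^m\norm{f_a}_{r+\delta}^m\norm{f_c}_{r+\delta}=\Bigl(\frac{2n}{\rho}\Bigr)^m\norm{f_a}_{r+\delta}^m\norm{f_c}_{r+\delta}.
\end{equation*}
Applying the induction hypothesis at radius $r+\delta$ with loss $\rho'$ is legitimate because $(r+\delta)+\rho'=r+\rho$, so the same $M_f$ appears. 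It gives $\norm{f_a}_{r+\delta}\le e^{F_a}(2e/\rho')^{|a|-1}M_f^{|a|}$, and the analogous bound for $c$.

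Next I would collect the factors. The powers of $M_f$ add up to $m|a|+|c|=n$. The powers of $2/\rho$ add up to $m+m(|a|-1)+|c|-1=n-1$. The factor $n^m$ combines with $e^{mF_a+F_c}$ to give exactly $e^{F_b}$, by \eqref{eq:def:Fb}. The powers of $e$ coming from the induction give $e^{m(|a|-1)+|c|-1}=e^{n-1-m}$. What remains is the price of shrinking the radius, namely the factor $(1-m/n)^{-(n-m-1)}$.

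The main (but elementary) obstacle is to show that this price is at most $e^m$, so that the total power of $e$ becomes exactly $e^{n-1}$. Write $x=m/n\in(0,1)$. Then
\begin{equation*}
(1-x)^{-(n-m-1)}\le(1-x)^{-n(1-x)}=\exp\bigl(-n(1-x)\ln(1-x)\bigr)\le e^{nx}=e^m,
\end{equation*}
using $0<1-x<1$ for the first inequality and $-(1-x)\ln(1-x)\le x$ on $(0,1)$ for the second. The latter is elementary: the function $x-(-(1-x)\ln(1-x))$ vanishes at $0$ and has derivative $-\ln(1-x)\ge0$.

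Combining everything yields $\norm{f_b}_r\le e^{F_b}(2e/\rho)^{n-1}M_f^n$, which closes the induction. The only genuinely delicate point is this choice of $\delta$ proportional to $m/|b|$. A naive choice such as $\delta=\rho/|b|$ leaves an uncompensated factor $m^m$, which $e^{m-1}$ cannot absorb.
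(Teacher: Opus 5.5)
Your proof is correct and follows the paper's argument. Both proceed by the same induction along the decomposition of \cref{lem:facto}: a fraction $\sigma$ of the radius is spent on the $m$ outer brackets via \cref{lem:lie}, the remaining $(1-\sigma)\rho$ goes to the induction hypothesis, and the estimate is closed with $\ln(1+x)\le x$. The only difference is that the paper takes $\sigma = m/(|b|-1)$, which forces a separate treatment of the case $|b|=m+1$ (where $a,c\in X$), whereas your choice $\sigma = m/|b|$ is always admissible and removes that case distinction.
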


\begin{proof}
    We argue by induction on $|b|$. 
    The result is immediate for $b\in X$.
    Let now $b=\ad_a^m(c)$.
    For any $\sigma \in (0,1)$, by \cref{lem:lie} with radius loss $\sigma \rho$,
    \begin{equation}
        \label{eq:fbr-loss-sigma}
        \norm{f_b}_r \le \left(\frac{2m}{\sigma \rho}\right)^{m} \norm{f_a}_{r + \sigma\rho}^m \norm{f_c}_{r + \sigma\rho}.
    \end{equation}
    When $|b| = m+1$, $a, c \in X$ and \eqref{eq:fbr-loss-sigma} with $\sigma = 1/e$ entails \eqref{eq:fb.Fb} because $F_b = m \ln |b|$.
    
    We now assume that $|b| > m + 1$.
    By the induction hypothesis with radius loss $(1-\sigma)\rho$,
    \begin{equation}
        \norm{f_a}_{r+\sigma\rho} \le e^{F_a} \left(\frac{2e}{(1-\sigma)\rho}\right)^{|a|-1} M_f^{|a|}
        \quad \text{and} \quad 
        \norm{f_c}_{r+\sigma\rho} \le e^{F_c} \left(\frac{2e}{(1-\sigma)\rho}\right)^{|c|-1} M_f^{|c|}.
    \end{equation}
    Substituting in \eqref{eq:fbr-loss-sigma} leads to
    \begin{equation}
        \norm{f_b}_r \le e^{m F_a + F_c}  \left(\frac{m}{e\sigma}\right)^{m} \left(\frac{1}{1-\sigma}\right)^{|b|-1-m} \left(\frac{2e}{\rho}\right)^{|b|-1} M_f^{|b|}.
    \end{equation}
    Let $n := |b| - 1$.
    We choose $\sigma := m / n < 1$.
    One has\footnote{We use the classical inequality $\ln(1+x) \le x$, so $(1+x)^\alpha \le e^{\alpha x}$, so $(1+\frac{m}{n-m})^{n-m} \le e^m$.}
    \begin{equation}
        \left(\frac{m}{e\sigma}\right)^{m} \left(\frac{1}{1-\sigma}\right)^{|b|-1-m}
        =
        \left(\frac{n}{e}\right)^{m} \left(\frac{n}{n-m}\right)^{n-m}
        \le n^m \le e^{m \ln |b|},
    \end{equation}
    which concludes the proof recalling \eqref{eq:def:Fb}.
\end{proof}

\section{Estimate of the log-sequence}
\label{sec:W}

In this section, $X$ is a set and $\B$ a Hall set on $X$.
Motivated by \cref{prop:Gammab,prop:Fb}, we study the growth of the sequence $W_b := \Gamma_b + F_b$.
Our goal is to understand whether it grows at most linearly with respect to $|b|$.
Recalling \eqref{eq:def.gamma.b} and \eqref{eq:def:Fb}, we can equivalently define $W$ as follows.

\begin{definition}[Log-sequence]
    \label{def:W}
    We define $W : \B \to \R$ by setting $W_b := 0$ for $b \in X$ and
    \begin{equation}
        \label{eq:W}
        W_b := m W_a + W_c + m \ln \frac{|b|}{|a|} - \ln (m!)
    \end{equation}
    when $b = \ad_a^m(c)$ is given by the unique representation of \cref{lem:facto}.
\end{definition}

\begin{example}
    Let $X_0, X_1 \in X$ and assume that $X_0 < X_1$ in $\B$.
    Then $(X_0,X_1) \in \B$ and $W_{(X_0,X_1)} = \ln 2 - \ln 2 = 0$.
    The Hall axioms of \cref{def:Hall} do not impose any order between $X_1$ and $(X_0,X_1)$.
    If $X_1 < (X_0, X_1)$, then $(X_1, (X_0,X_1)) \in \B$ and $W_{(X_1, (X_0,X_1))} = \ln 3$.
    Otherwise, $((X_0, X_1), X_1) \in \B$ and $W_{((X_0,X_1),X_1)} = \ln \frac{3}{2}$.
\end{example}

\subsection{A length-based estimate is not enough}

A natural strategy to estimate the growth of $W$ is to propagate an estimate on its largest possible value for a given bracket length.
This leads to the following estimate, which is insufficient for our purpose, but which we give as an illustration of why such a strategy fails.

\begin{lemma}
    For all $b \in \B$, $W_b \le \ln (|b|!)$.
\end{lemma}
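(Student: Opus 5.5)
We argue by well-founded induction on $|b|$, following the decomposition of \cref{lem:facto}. For $b \in X$ we have $W_b = 0 = \ln(1!)$, so the base case holds. Now let $b = \ad_a^m(c)$ with $|b| = m|a| + |c|$, and assume $W_a \le \ln(|a|!)$ and $W_c \le \ln(|c|!)$. By \eqref{eq:W} it suffices to prove
\begin{equation}
    m \ln(|a|!) + \ln(|c|!) + m \ln |b| - m \ln |a| - \ln(m!) \le \ln(|b|!).
\end{equation}
Write $p := |a| \ge 1$, $q := |c| \ge 1$ and $n := |b| = mp + q$. Exponentiating, the target inequality becomes
\begin{equation}
    \frac{(p!)^m \, q! \, n^m}{p^m \, m!} \le n!,
    \qquad\text{that is}\qquad
    \frac{((p-1)!)^m \, q! \, n^m}{m!} \le n!.
\end{equation}

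To prove this, we use the multinomial coefficient
\begin{equation}
    \frac{(mp+q)!}{(p!)^m \, q!} \ge 1.
\end{equation}
Refining it slightly, we count the ways to partition $\{1,\dots,n\}$ into $m$ unlabelled blocks of size $p$ and one labelled block of size $q$. This count equals $\frac{n!}{(p!)^m \, m! \, q!}$ and is an integer $\ge 1$, so
\begin{equation}
    n! \ge (p!)^m \, m! \, q!.
\end{equation}
It then suffices to check that
\begin{equation}
    (p!)^m \, m! \, q! \ge \frac{((p-1)!)^m \, q! \, n^m}{m!},
    \qquad\text{i.e.}\qquad
    p^m \, (m!)^2 \ge n^m = (mp+q)^m.
\end{equation}
This can fail when $q$ is large (for example $m = 1$, $p = 1$, $q$ large), so the division of the $n$ factors must be done more carefully. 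This is the main obstacle.

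To handle it, we split $n! = \prod_{k=1}^{n} k$ differently. The top $m$ factors $n, n-1, \dots, n-m+1$ will absorb $n^m$, up to the loss $\prod_{j<m} \frac{n}{n-j} \le \frac{n^m}{n!/(n-m)!}$. The remaining $(n-m)!$ must then dominate $((p-1)!)^m \, q!$, after dividing by $m!$. Indeed, $n - m = m(p-1) + q$, and the multinomial bound gives
\begin{equation}
    (n-m)! \ge ((p-1)!)^m \, q!.
\end{equation}
It remains to control $\frac{n^m}{n(n-1)\cdots(n-m+1)}$ by $m!$. Since $n \ge m+1$, we have $n - j \ge \frac{n(m+1-j)}{m+1}$ for $0 \le j < m$, and hence
\begin{equation}
    \frac{n^m}{\prod_{j<m}(n-j)} \le \prod_{j<m} \frac{m+1}{m+1-j} = \frac{(m+1)^m}{(m+1)!}.
\end{equation}
This is at most $m!$ only if $(m+1)^m \le m!\,(m+1)!$, which holds for all $m \ge 1$ (one checks $m = 1, 2$ directly, and then $m! \ge (m/e)^m$ suffices). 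Combining these bounds closes the induction.
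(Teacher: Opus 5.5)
Your argument is correct and is essentially the paper's proof. Once the abandoned first attempt is discarded, you combine the same two inequalities the paper uses: $((p-1)!)^m\,q!\le (n-m)!$ from the multinomial coefficient, and $(n-m)!\,n^m\le n!\,m!$. The only difference is that the paper packages the induction through the length-only maximum $\Lambda_n$, whereas you induct on $b$ directly.

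There is one small slip in your check of $(m+1)^m\le m!\,(m+1)!$. The bound $m!\ge(m/e)^m$ does not yet give this inequality at $m=3$, so checking only $m=1,2$ is not enough. It is simpler to bound each factor by $\frac{n}{n-j}\le j+1$, which holds because $n\ge j+1$; this gives $\prod_{j<m}\frac{n}{n-j}\le m!$ directly.
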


\begin{proof}
    From \cref{def:W}, one obtains that $W_b \le \Lambda_{|b|}$ where $\Lambda_1 := 0$ and, for $n \ge 2$,
    \begin{equation}
        \label{eq:An}
        \Lambda_n := \max \left\{ m \Lambda_a + \Lambda_c + m \ln \frac{n}{a} - \ln (m!) \mid a,c,m \in \N^*, \enskip m a + c = n \right\}.
    \end{equation}
    Let us prove by strong induction that $\Lambda_n = \ln(n!)$.

    Given $(a,c,m) \in \N^*$ such that $n = m a + c$, by the multinomial formula,
    \begin{equation}
        \label{eq:AN-1}
        ((a-1)!)^m c! \le (n-m)!
    \end{equation}
    Moreover, for $1 \le m \le n$,
    \begin{equation}
        \label{eq:AN-2}
        (n-m)! n^m \le n! m!
    \end{equation}
    Combining \eqref{eq:AN-1} and \eqref{eq:AN-2} entails that
    \begin{equation}
        (a!)^m c! \frac{n^m}{a^m m!} \le n!
    \end{equation}
    Thus $\Lambda_n \le \ln(n!)$.
    Moreover, equality is achieved in these estimates when $a = m = 1$ and $c = n-1$.
    Hence, for all $b \in \B$, $W_b \le \Lambda_{|b|} = \ln (|b|!)$.
\end{proof}

In fact, when $X$ is infinite, some Hall sets may saturate this bound, as we now illustrate.

\begin{lemma}
    Assume that $X$ is infinite.
    There exists a Hall set $\B$ on $X$ and a sequence $b_n \in \B$ such that, for all $n \ge 2$, $|b_n| = n$ and $W_{b_n} = \ln (n!)$.
\end{lemma}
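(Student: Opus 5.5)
Equality in the proof of the previous lemma is attained exactly when $a=m=1$ and $c=n-1$. So I will build brackets of the form $b_n=(x_n,b_{n-1})$, where $x_n\in X$ is a fresh letter at each step, and choose the order so that the Hall factorization of \cref{lem:facto} is $b_n=\ad_{x_n}^1(b_{n-1})$. Then \eqref{eq:W} gives
\begin{equation}
    W_{b_n} = W_{x_n} + W_{b_{n-1}} + \ln n - \ln 1! = W_{b_{n-1}} + \ln n,
\end{equation}
since $|x_n|=1$. Starting from $W_{b_1}=0$ for $b_1:=x_1\in X$, induction yields $W_{b_n}=\ln(n!)$ with $|b_n|=n$. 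The infinitude of $X$ is used to have a new letter $x_n$ at each step; with a fixed letter, $m$ would grow and $\ln(m!)$ would spoil the bound.

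Concretely, I pick distinct letters $x_1,x_2,\dots\in X$ and define $b_1:=x_1$, $b_n:=(x_n,b_{n-1})$. For the factorization to be $(a,c,m)=(x_n,b_{n-1},1)$, three conditions are needed. First, $x_n<b_{n-1}$. Second, either $b_{n-1}\in X$ (the case $n=2$), or $b_{n-1}=(x_{n-1},b_{n-2})$ with $x_{n-1}<x_n$ strictly. Third, $b_n\in\B$, which by \cref{def:Hall} requires the same conditions with $\le$. So the order must satisfy $x_1<x_2<\dotsb$ together with $x_n<b_{n-1}$ for all $n$, for instance by placing all letters below all the $b_k$ with $k\ge2$ and setting $x_n<x_1=b_1$. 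But this conflicts with $x_1<x_2$. I will therefore re-index to avoid the conflict: take $b_1:=y$ as a distinguished letter, put $x_2<x_3<\dotsb<y$ with all the $x_n$ distinct from $y$, and set $b_n:=(x_n,b_{n-1})$. The conditions then become $x_n<b_{n-1}$ and $x_{n-1}<x_n$.

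The main obstacle is the existence of a Hall set $\B$ whose order realizes these constraints, since \cref{def:Hall} requires a total order on all of $\B$ satisfying $a<(a,b)$. I will invoke the standard fact, via \cref{lem:Hall=Lazard} or Viennot's construction, that any total order on $\Br(X)$ satisfying $a<(a,b)$ for all $a,b$ yields a Hall set when $\B$ is defined recursively by the axioms. For example, take an order in which trees are compared first by the position of their leftmost... Simpler: order trees by length, then order the letters as $x_2<x_3<\dotsb<y<(\text{other letters})$, and break remaining ties lexicographically.

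Length-compatibility gives $x_n<b_{n-1}$ for $n\ge3$, since $|b_{n-1}|\ge2>1$. For $n=2$ we need $x_2<y$, which holds by construction. It also gives $a<(a,b)$ automatically, and $x_{n-1}<x_n$ holds by the chosen order on letters. A length-then-lexicographic order is a valid Hall order of the classical Hall type, and $b_n\in\B$ follows by induction from the axiom of \cref{def:Hall}. Finally, I will check uniqueness in \cref{lem:facto}: the strict inequality $x_{n-1}<x_n$ forces $m=1$.
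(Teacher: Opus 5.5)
Your proposal is correct and follows essentially the paper's proof. With $y=X_0$ and $x_n=X_{n-1}$, your brackets $b_n=(x_n,(x_{n-1},\dots,(x_2,y)\dots))$ are exactly the paper's, and the paper likewise uses a length-compatible order with $X_1<X_0$ and $X_1<X_2<\dotsb$, the Hall set attached to that order, and the factorization $(a,c,m)=(x_n,b_{n-1},1)$ giving $W_{b_n}=W_{b_{n-1}}+\ln n$; one cosmetic point is that existence of that Hall set comes from defining $\B$ recursively by length from an order with $a<(a,b)$ (the paper cites a lemma from its companion work for this), not from the Hall/Lazard equivalence.
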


\begin{proof}
    Since $X$ is infinite, let $X_0, X_1, X_2, \dotsc \in X$.
    Consider any order on $\Br(X)$ compatible with length (i.e.\ such that $|a| < |b| \Rightarrow a < b$) and such that $X_1 < X_0$ and $X_1 < X_2 < X_3 < \dotsb$.
    Let $\B$ be the unique Hall set associated with this order (see \cite[Lemma 1.37]{BeauchardLeBorgneMarbach2022}).
    Then, for all $n \ge 2$, the bracket $b_n := (X_{n-1},(X_{n-2}, \dotsc, (X_1, X_0) \dotsb)) \in \B$.
    Moreover, $|b_n| = n$ and $W_{b_n} = \ln (n!)$.
\end{proof}

\subsection{An easy sufficient condition for a particular class of Hall sets}
\label{sec:lyndon}

We identify a class of Hall sets for which it is straightforward to prove that $W$ grows at most linearly.
We show that it applies in particular to the Lyndon basis.

\begin{lemma}
    Let $\B$ be a Hall set on $X$. 
    Assume that there exists $M>0$ such that
    \begin{equation}
        \label{eq:c/a-bounded}
        \forall b \in \B \setminus X, \quad |c| \le M |a|,
    \end{equation}
    where $b = \ad_a^m(c)$ as in \cref{lem:facto}.
    Then, for all $b \in \B$, $W_b \le |b| (1 + \ln (1+M))$.
\end{lemma}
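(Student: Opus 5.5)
We argue by strong induction on $|b|$ that $W_b \le K|b|$ with $K := 1+\ln(1+M)$; the case $b \in X$ is trivial since $W_b = 0$. For $b = \ad_a^m(c)$ as in \cref{lem:facto}, the induction hypothesis applies to $a$ and $c$ because both are shorter than $b$. Plugging it into \eqref{eq:W} gives
\begin{equation*}
    W_b \le K(m|a| + |c|) + m \ln \frac{|b|}{|a|} - \ln(m!) = K|b| + m \ln \frac{|b|}{|a|} - \ln(m!).
\end{equation*}
So the induction closes as soon as the remainder $m\ln(|b|/|a|) - \ln(m!)$ is nonpositive. In general it is not, so a direct induction fails and we must strengthen the hypothesis.

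First we bound the remainder. Hypothesis \eqref{eq:c/a-bounded} gives $|b| = m|a| + |c| \le (m+M)|a|$, so $|b|/|a| \le m+M \le m(1+M)$ since $m \ge 1$. Combined with $m! \ge (m/e)^m$, this yields
\begin{equation*}
    m\ln\frac{|b|}{|a|} - \ln(m!) \le m\ln(m(1+M)) - m\ln m + m = m\,(1+\ln(1+M)) = mK.
\end{equation*}
The remainder is therefore at most $Km$, which is linear in $m$ but not absorbed by the bare bound $K|b|$.

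To absorb this positive term, we prove the stronger statement $W_b \le K(|b|-1)$ for every $b \in \B$, which still implies the claim. For $b \in X$ it reads $0 \le 0$. In the inductive step, using $W_a \le K(|a|-1)$ and $W_c \le K(|c|-1)$,
\begin{equation*}
    W_b \le mK(|a|-1) + K(|c|-1) + Km = K(m|a| + |c| - 1) = K(|b|-1).
\end{equation*}
The $-1$ attached to each of the $m$ copies of $a$ supplies exactly the $-Km$ needed to cancel the remainder. Hence $W_b \le K(|b|-1) \le |b|(1+\ln(1+M))$.

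The only delicate step is recognising that the plain linear hypothesis does not propagate and that subtracting one per letter compensates the $m$ factors of $a$. Everything else reduces to the estimate $|b|/|a| \le m(1+M)$ and Stirling's lower bound $m! \ge (m/e)^m$.
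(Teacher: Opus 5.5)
Your proposal is correct and follows essentially the same route as the paper: the paper sets $W'_b := W_b + A$ and shows $W'_b \le m W'_a + W'_c$, which is exactly your strengthened induction hypothesis $W_b \le K(|b|-1)$. The remainder is bounded by $mA$ in the same way, via $m! \ge (m/e)^m$ and $|c| \le M|a|$.
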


\begin{proof}
    Let $A := 1 + \ln(1+M)$ and $W'_b := W_b + A$.
    Then the induction \eqref{eq:W} and \eqref{eq:c/a-bounded} lead to
    \begin{equation}
        W_b' = m W_a' + W_c' + \left( m \ln \left(m+\frac{|c|}{|a|}\right) - \ln (m!) - m A \right) \le m W_a' + W_c'.
    \end{equation}
    Indeed, since $m! \ge (m/e)^m$, one has
    \begin{equation}
        m \ln \left(m+\frac{|c|}{|a|}\right) - \ln (m!)
        \le m \left(1 + \ln \left(1+ \frac{|c|}{m|a|}\right) \right) \le m A.
    \end{equation}
    By a strong induction on $|b|$, this entails that $W_b \le A |b|$ for all $b \in \B$.
\end{proof}

This holds for the Hall set obtained by standard factorization of Lyndon words (see \cite{ChenFoxLyndon1958,Lyndon1954,Shirshov1958}).
Let $X^*$ denote the set of words on $X$, and $X^+$ the non-empty ones.
Given an order on the alphabet~$X$, let $\prec$ be the associated lexicographic order on $X^*$.
A Lyndon word is an element $f \in X^+$ such that, for all $u, v \in X^+$, if $f = u v$, then $f \prec v u$.
Let $\Lyn$ be the set of Lyndon words.
Given $f \in \Lyn$, let $f'$ be its longest proper prefix in $\Lyn$.
A well-known result \cite{ChenFoxLyndon1958} states that, in such a case $f = f' f''$ where $f'' \in \Lyn$.
This allows us to define a \emph{standard factorization} map $\pi : \Lyn \to \Br(X)$ by setting $\pi(x) := x$ for $x \in X$ and $\pi(f) := (\pi(f'), \pi(f''))$.
Then $\pi(\Lyn)$ is a Hall set (see \cite[Theorem 5.1]{Reutenauer1993}).
We refer to \cite[Section 7.1]{BeauchardLeBorgneMarbach2022} for more details.

We prove the following lemma in \cref{sec:lyndon-proof}.

\begin{lemma}
    \label{lem:lyndon}
    Let $X$ be an ordered set and $\B$ be the associated Lyndon Hall set.
    For any $b \in \B \setminus X$, one has $|c| \le |a|$ where $b = \ad_a^m(c)$ as in \cref{lem:facto}.
\end{lemma}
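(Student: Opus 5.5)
The plan is to work entirely with words, by reading the Hall decomposition of \cref{lem:facto} in the Lyndon setting. Let $b = \ad_a^m(c) \in \B \setminus X$. Write $w(\cdot)$ for the foliage (the underlying word) of an element of $\Br(X)$; since $\pi$ is injective, we identify $a$, $c$ and so on with their Lyndon words. The innermost bracket $(a,c)$ is itself an element of $\B$, so its word $ac$ is Lyndon. By the definition of $\pi$, $a$ is the longest proper Lyndon prefix of $ac$.

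If $c \in X$, then $|c| = 1 \le |a|$ and there is nothing to prove. Otherwise $c = (c',c'')$, and $c'$ is the longest proper Lyndon prefix of the Lyndon word $c$. I argue by contradiction and assume $|c| > |a|$. I will use the following classical facts about Lyndon words:
\begin{itemize}
\item a Lyndon word is strictly smaller than each of its proper suffixes, so $ac \prec c$, hence $a \prec c$;
\item if $u \prec v$ are Lyndon, then $uv$ is Lyndon;
\item if $p\,x$ is a prefix of a Lyndon word, with $x$ a letter, and $y > x$ is a letter, then $p\,y$ is Lyndon.
\end{itemize}
I also use that the Hall order on $\pi(\Lyn)$ is the lexicographic order $\prec$ on words. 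This is the convention under which $\pi(\Lyn)$ is a Hall set in Viennot's sense, with $(a,b) \in \B$ requiring $a < b$.

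Case 1: $a$ is a prefix of $c$. Since $|a| < |c|$, $a$ is then a proper Lyndon prefix of $c$. Moreover $c'$ and $a$ are both prefixes of $c$, and the Hall condition gives $c' < a$, i.e. $c' \prec a$. For two prefixes of the same word, this forces $c'$ to be a proper prefix of $a$, so $|c'| < |a|$. This contradicts the maximality of $c'$ as the longest proper Lyndon prefix of $c$.

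Case 2: $a$ is not a prefix of $c$. Since $a \prec c$, there is a first index $i \le |a|$ with $a_j = c_j$ for $j < i$ and $a_i < c_i$. The word $c_1\dotsm c_i = a_1 \dotsm a_{i-1} c_i$ is obtained from the prefix $a_1\dotsm a_i$ of the Lyndon word $a$ by enlarging its last letter, so it is Lyndon. It also satisfies $a \prec c_1\dotsm c_i$, since the two words first differ at position $i$. Hence $a\,c_1\dotsm c_i$ is Lyndon. Because $i \le |a| < |c|$, this word is a proper prefix of $ac$, and it is strictly longer than $a$. This contradicts the fact that $a$ is the longest proper Lyndon prefix of $ac$.

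Both cases are contradictory, so $|c| \le |a|$. The main point needing care is Case 2, specifically the choice of the witness prefix $a\,c_1\dotsm c_i$ that beats the maximality of $a$. The remaining effort lies in checking that the order and bracket conventions (longest Lyndon prefix as the left factor, and the Hall order being lexicographic) match those of \cite{BeauchardLeBorgneMarbach2022} and \cite{Reutenauer1993}.
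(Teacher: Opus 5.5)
Your proof is correct. It is organized differently from the paper's proof, though its key step coincides with one of the paper's lemmas.

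The paper argues top-down. It first proves that every Lyndon word factors as $\ell=\ell_1^m\bar\ell_1 z$ (\cref{cor:lyndon_power_prefix}). This requires iterating the standard factorization and checking at each step that $\ell_1$ remains the longest proper Lyndon prefix of the successive right factors. It then identifies this word factorization with the decomposition $b=\ad_a^m(c)$ of \cref{lem:facto}.

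You argue bottom-up. You start from the decomposition of \cref{lem:facto}, use that $(a,c)\in\B$ so that $a$ is the longest proper Lyndon prefix of $ac$, and split according to whether $a$ is a prefix of $c$. Your Case 2 is exactly the argument of \cref{prop:lyndon_description}: take the first letter where $c$ exceeds $a$, apply \cref{lem:lyndon_prefix}, and produce a longer Lyndon proper prefix of $ac$. Your Case 1 takes the place of the paper's iteration and its preservation observation. The strictness $c'<a$ in \cref{lem:facto} is what forbids $a$ from being a prefix of $c$.

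What each route buys: yours is shorter and skips the paper's word-to-tree identification step, because you read word properties directly off the trees. The paper's route yields more, namely the explicit shape $c=\pi(\bar\ell_1 z)$ and a structure result valid for all Lyndon words.

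The order convention you flag (Hall order equal to lexicographic order) is used only in Case 1, and it can be dropped there. Suppose $a$ is a proper prefix of $c$. Maximality of $c'$ makes $a$ a prefix of $c'$, and $c'\neq a$ (with injectivity of $\pi$) makes it a proper prefix. Hence $a\prec c'$, so $ac'$ is a Lyndon proper prefix of $ac$ longer than $a$, which is a contradiction. Case 2 never uses the order, so with this change your argument is independent of the choice of order on $\pi(\Lyn)$.
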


In many Hall sets, there need not exist a finite constant $M$ such that \eqref{eq:c/a-bounded} holds.
As an example, given $X_0, X_1 \in X$ and $k \ge 2$, consider $b_k := ((X_0,X_1),\ad_{X_0}^k(X_1))$.
For this bracket, $|a| = 2$ and $|c| = k+1$ so $|c|/|a|$ can be arbitrarily large.
It belongs to the following Hall sets:
\begin{itemize}
    \item the historical length-compatible bases of \cite{Hall1950,Hall1933} when $X_0 < X_1$;
    
    \item the left-right lexicographic basis of \cite[Section 8.2]{BeauchardLeBorgneMarbach2022};
    
    \item the $\B^\star$ basis constructed for control theory in \cite[Section 3]{BeauchardMarbach2026}, up to exchanging $X_0$ and $X_1$.
\end{itemize}

\subsection{A universal linear bound}
\label{sec:univ-bound}

In this section, $X$ is finite and nonempty and $\B$ is an arbitrary Hall set over $X$. 

To control the growth of $W_b$, we will perform estimates on a kind of ``partition function'' depending on a regularity parameter $\beta \ge 0$. 
For $Z \subset \B$, let
\begin{equation}
    \label{eq:def-Pp}
    \mathrm{P}_Z(\beta) := \sum_{b \in Z} p_b(\beta)
    \quad \text{where} \quad 
    p_b(\beta) := \frac{1}{|b|} \exp \big(W_b - \beta |b|\big).
\end{equation}
The natural induction strategy within a Hall set is called ``Lazard elimination''.
Let $a, c \in \B$ such that $a < c$ and ($c \in X$ or $c = (c',c'')$ with $c' < a$).
We define the set 
\begin{equation}
    \label{eq:def-a*c}
    a^* c := \{ \ad_a^m(c) : m \in \N \} \subset \B.  
\end{equation}
Iterating such eliminations exhausts the full Hall set inductively.
The key idea of this section is that, up to small loss in the regularity parameter $\beta$, one can control $\mathrm{P}_{a^*c}$ from $p_c$.

\begin{proposition}
    \label{prop:one-step-lazard-contraction}
    Let $a < c \in \B$ such that $c \in X$ or $c = (c',c'')$ with $c' < a$.
    For any $\beta \ge 0$,
    \begin{equation}
        \label{eq:one-step-lazard-contraction}
        \mathrm{P}_{a^* c}(\beta+p_a(\beta)) \le p_c(\beta).
    \end{equation}
\end{proposition}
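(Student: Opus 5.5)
The plan is to compute $p_b(\beta+p_a(\beta))$ explicitly for every $b\in a^*c$ in terms of $p_a(\beta)$ and $p_c(\beta)$. The resulting sum over $m$ should then be recognisable as an Abel-type (Lambert/Borel–Tanner) power series, which can be evaluated in closed form.

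\textbf{Step 1 (structure of $a^*c$).} For $m\ge1$, $b=\ad_a^m(c)$ belongs to $\B$: iterate \cref{def:Hall}, using $a<\ad_a^{k}(c)$ from the third axiom and the fact that the left factor of $\ad_a^k(c)$ is $a\le a$. Moreover, since $c\in X$ or $c=(c',c'')$ with $c'<a$, the tuple $(a,c,m)$ is exactly the unique representation of $b$ given by \cref{lem:facto}. Hence \eqref{eq:W} applies to $b$ with this $(a,c,m)$. The map $m\mapsto \ad_a^m(c)$ is also injective, because the lengths $|\ad_a^m(c)|$ are distinct. So $\mathrm{P}_{a^*c}$ is a sum over $m\ge0$, the term $m=0$ being $c$ itself.

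\textbf{Step 2 (explicit terms).} Write $x:=p_a(\beta)$, $\beta':=\beta+x$ and $n_m:=m|a|+|c|$. Plugging \eqref{eq:W} into \eqref{eq:def-Pp} and regrouping $e^{W_a-\beta|a|}=|a|\,p_a(\beta)$ and $e^{W_c-\beta|c|}=|c|\,p_c(\beta)$, the factors $|a|^{-m}$ cancel and one gets
\begin{equation*}
p_{\ad_a^m(c)}(\beta')=p_c(\beta)\,\frac{|c|\,n_m^{m-1}}{m!}\,x^m e^{-x n_m},
\end{equation*}
which is also valid for $m=0$, since then it reads $p_c(\beta)e^{-x|c|}$. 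Setting $z:=x e^{-x|a|}$, it therefore suffices to prove
\begin{equation*}
S:=\sum_{m\ge0}\frac{|c|\,(|c|+m|a|)^{m-1}}{m!}\,z^m\le e^{x|c|}.
\end{equation*}

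\textbf{Step 3 (Abel/Lagrange identity).} By Lagrange inversion, the classical Abel-type identity holds:
\begin{equation*}
\sum_{m\ge0}\frac{\alpha(\alpha+m\gamma)^{m-1}}{m!}z^m=e^{\alpha t(z)}\quad\text{for } 0\le z\le \frac{1}{e\gamma},
\end{equation*}
where $t(z)$ is the smallest nonnegative root of $t=z e^{\gamma t}$. This is proved on the open disc of convergence and extended to the endpoint $z=1/(e\gamma)$ by monotone convergence, since all terms are nonnegative. We apply it with $\alpha=|c|$ and $\gamma=|a|$. The function $x\mapsto xe^{-x|a|}$ has maximum value $1/(e|a|)$, so $z$ always lies in the admissible range. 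Moreover $x$ itself solves $t=ze^{|a|t}$, so the smallest root satisfies $t(z)\le x$. Consequently $S=e^{|c|t(z)}\le e^{|c|x}$, which is \eqref{eq:one-step-lazard-contraction}.

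\textbf{Main obstacle.} The main point of care is Step 3, and in particular the branch issue. When $x|a|>1$, the series does not sum to $e^{x|c|}$ but to $e^{|c|t_0}$, where $t_0<x$ is the other root. This only helps the inequality, but it must be justified cleanly. I would do this by noting that the partial sums are increasing in $z$ and that the identity holds for $z<1/(e|a|)$, where $t(z)$ is the principal (analytic) branch. Step 1 (checking that the Lazard elimination set is indexed bijectively by $m$ with the correct factorization) is routine from \cref{def:Hall,lem:facto}.
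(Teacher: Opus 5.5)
Your proposal is correct and follows the paper's route: the same explicit computation of $p_{\ad_a^m(c)}(\beta+p_a(\beta))$ via \eqref{eq:W}, then the Lagrange-inversion (Abel/Lambert) series identity, and finally the observation that the smallest root is at most $p_a(\beta)$. The paper normalizes with $t=|c|/|a|$ and $\alpha=|a|p_a(\beta)$ where you keep $|c|$ and $|a|$ as separate parameters, and your check that $a^*c\subset\B$ with $(a,c,m)$ as the canonical factorization, together with your handling of the endpoint $z=1/(e|a|)$ and of the branch, makes explicit what the paper leaves implicit.
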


\begin{proof}
    Let $t := |c| / |a|$.
    By the inductive definition \eqref{eq:W},
    \begin{equation}
        e^{W_{\ad_a^m(c)}} = \frac{(m+t)^m}{m!} e^{m W_a} e^{W_c}.
    \end{equation}
    Thus, by \eqref{eq:def-Pp} and \eqref{eq:def-a*c},
    \begin{equation}
        \begin{split}
            \mathrm{P}_{a^*c}(\beta + p_a(\beta))
            & = \sum_{m \ge 0} \frac{1}{m|a|+|c|} e^{W_{\ad_a^m(c)}} e^{-(\beta + p_a(\beta)) (m|a|+|c|)} \\
            & = \sum_{m \ge 0} 
            \frac{|c|}{m|a|+|c|} \frac{(m+t)^m}{m!} \left(e^{W_a - \beta|a| - |a| p_a(\beta)}\right)^m \frac{e^{W_c - \beta|c|}}{|c|} e^{- |c| p_a(\beta)} \\
            & = K_{\alpha}(t) p_c(\beta) e^{-\alpha t}
        \end{split}
    \end{equation}
    where $\alpha := |a| p_a(\beta)$ and $K_\alpha(t)$ is defined in \eqref{eq:k-alpha} below.
    By \cref{lem:K_alpha}, $K_\alpha(t) \leq e^{\alpha t}$, which concludes the proof of \eqref{eq:one-step-lazard-contraction}.
\end{proof}

\begin{lemma}
    \label{lem:K_alpha}
    For any $\alpha, t > 0$,
    \begin{equation}
        \label{eq:k-alpha}
        K_\alpha(t) := \sum_{m\ge0} \frac{t}{m+t} \frac{(m+t)^m}{m!} (\alpha e^{-\alpha})^m \le e^{\alpha t}.
    \end{equation}
\end{lemma}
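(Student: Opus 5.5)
This is a Lambert/Abel-type series. The plan is to use the Lagrange inversion identity for the generalized exponential series. Define $w = W(z)$ through $w e^{-w} = z$, taking the branch with $w(0)=0$, which is analytic for $|z|<1/e$. The classical expansion is
\begin{equation*}
    e^{t w} = \sum_{m \ge 0} \frac{t (m+t)^{m-1}}{m!} z^m ,
\end{equation*}
valid for $|z| < 1/e$ and any $t$. It follows from Lagrange inversion applied to $H(w)=e^{tw}$ with $w = z\varphi(w)$, $\varphi(w)=e^{w}$. The coefficient of $z^m$ is
\begin{equation*}
    \frac{1}{m}[w^{m-1}]\, t e^{tw} e^{mw} = \frac{t(t+m)^{m-1}}{m!} .
\end{equation*}
Since $\frac{t}{m+t}(m+t)^m = t(m+t)^{m-1}$, the series defining $K_\alpha(t)$ is exactly this series evaluated at $z = \alpha e^{-\alpha}$.

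We split into two cases.

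\emph{Case $0<\alpha\le 1$.} Here $z=\alpha e^{-\alpha}\in(0,1/e]$, and the principal branch satisfies $W(\alpha e^{-\alpha})=\alpha$, since $w\mapsto we^{-w}$ is increasing on $[0,1]$. For $z<1/e$ we directly get $K_\alpha(t)=e^{\alpha t}$, so equality holds. At $\alpha=1$, i.e. $z=1/e$, the terms are positive and behave like $m^{-3/2}$ by Stirling, so the series converges. Abel's theorem on positive series (monotone convergence as $z\uparrow 1/e$) then gives $K_1(t)=e^{t}$.

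\emph{Case $\alpha>1$.} Now $z=\alpha e^{-\alpha}<1/e$ equals $\alpha' e^{-\alpha'}$ for the unique $\alpha'\in(0,1)$ with that value. The series depends only on $z$, so $K_\alpha(t)=K_{\alpha'}(t)=e^{\alpha' t}\le e^{\alpha t}$.

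The main obstacle is justifying the Lagrange expansion with a real parameter $t>0$ together with its radius of convergence. There are two ways to do it. The first is to cite the standard Lambert-series identity. The second is to prove it by checking the functional equation: both sides are analytic in $z$ near $0$, agree as formal power series by Lagrange–Bürmann, and the coefficients are positive with growth $\sim e^m m^{t-3/2}$. The latter gives radius $1/e$, so the identity extends to $|z|<1/e$ by analytic continuation. Positivity of all terms makes the endpoint $z=1/e$ a routine monotone-limit argument.
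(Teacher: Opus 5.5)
Your proof is correct and follows the paper's argument. Lagrange inversion for $w = z e^{w}$ identifies $K_\alpha(t)$ with $e^{tA(\alpha e^{-\alpha})}$, and your case split $\alpha\le 1$ / $\alpha>1$ is an explicit proof that $A(\alpha e^{-\alpha})\le\alpha$. Your treatment of the radius $1/e$ and the endpoint $z=1/e$ fills in what the paper leaves to the cited Lagrange inversion theorem.

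One small slip: since $(m+t)^{m-1}\sim e^{t}m^{m-1}$, the coefficients grow like $e^{m}m^{-3/2}$, not $e^{m}m^{t-3/2}$. The $e^{m}m^{-3/2}$ rate is what your endpoint convergence argument at $z=1/e$ actually uses.
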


\begin{proof}
    For $z \in [0,\frac 1e]$, let $A(z)$ denote the smallest nonnegative solution of $A(z) = z e^{A(z)}$ (one has $A(z) = - W_0(-z)$ where $W_0$ is the principal branch of the Lambert function).

    By Lagrange inversion (see \cite[Theorem 1]{SuryaWarnke2023}, with $\Phi = \exp$ and $H = \exp(t\cdot)$; or \cite[eq.\ (3.2.3)]{Gessel2016}), one has:
    \begin{equation}
        e^{t A(z)} = \sum_{m\ge0}
        \frac{t}{m+t}
        \frac{(m+t)^m}{m!}z^m.
    \end{equation}
    Hence $K_\alpha(t) = e^{t A(\alpha e^{-\alpha})}$.
    Since $A(\alpha e^{-\alpha}) \leq \alpha$, the conclusion follows.
\end{proof}

In \cref{prop:one-step-lazard-contraction}, the regularity loss is precisely controlled by $p_a(\beta)$.
This allows for a bootstrap argument, where the energy controls the loss, which controls the energy.

\begin{proposition}
    \label{prop:global-pressure-bound}
    Let $X$ be finite, and let $\B$ be a Hall set over $X$. 
    For any $\beta \ge 0$,
    \begin{equation}
        \label{eq:global-pressure-bound}
        \mathrm{P}_\B(\beta + |X| e^{-\beta}) \le |X| e^{-\beta}.
    \end{equation}
\end{proposition}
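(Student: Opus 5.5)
We prove \eqref{eq:global-pressure-bound} by running Lazard elimination (\cref{lem:Hall=Lazard}) and iterating \cref{prop:one-step-lazard-contraction}. Since all terms $p_b$ are nonnegative, it suffices to show that for every $L \in \N^*$ the partial sum over $\B_{\le L}$ satisfies $\mathrm{P}_{\B_{\le L}}(\beta + |X| e^{-\beta}) \le |X| e^{-\beta}$, and then let $L \to \infty$. Fix $L$ and write $\B_{\le L} = \{b_1 < \dotsb < b_{k+1}\}$ with the sets $Y_0 = X, Y_1, \dots, Y_k$ of \cref{def:Lazard}.

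The key quantity is the \emph{pressure of the remaining alphabet}, $Q_j(\beta') := \mathrm{P}_{Y_j}(\beta')$. Initially $Q_0(\beta) = \sum_{x \in X} p_x(\beta) = |X| e^{-\beta}$, since $W_x = 0$ and $|x| = 1$. At step $j$ we eliminate $a := b_{j+1} \in Y_j$. Up to the length truncation, $Y_{j+1}$ is the disjoint union over $v \in Y_j \setminus\{a\}$ of the sets $a^* v$. Each such $v$ satisfies the hypothesis of \cref{prop:one-step-lazard-contraction} for the pair $(a,v)$: this follows from the Hall/Lazard structure, since $a$ is the smallest remaining element and any bracket $v$ was produced by eliminating an earlier, hence smaller, element. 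The elements $\ad_a^m(v)$ with $m \ge 0$ also lie in $\B$ (a Hall-set fact; the $m = 0$ term is $v$ itself). Summing \eqref{eq:one-step-lazard-contraction} over $v$ then gives
\begin{equation*}
    Q_{j+1}(\beta_j + p_a(\beta_j)) \le Q_j(\beta_j) - p_a(\beta_j).
\end{equation*}
Here we used that the $p_b$ are nonincreasing in $\beta$, so truncating to lengths $\le L$ only helps, and that the sets $a^*v$ are disjoint because $\B$ is a basis. Define $\beta_{j+1} := \beta_j + p_{b_{j+1}}(\beta_j)$ with $\beta_0 := \beta$.

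This gives a telescoping bootstrap. Adding $p_{b_{j+1}}(\beta_j)$ to both sides shows that $E_j := Q_j(\beta_j) + \sum_{i \le j} p_{b_i}(\beta_{i-1})$ is nonincreasing, hence $E_j \le |X| e^{-\beta}$. Consequently $\beta_j - \beta = \sum_{i \le j} p_{b_i}(\beta_{i-1}) \le |X| e^{-\beta}$ for all $j$. Using monotonicity of $p_b$ in its argument,
\begin{equation*}
    \mathrm{P}_{\B_{\le L}}(\beta + |X|e^{-\beta}) = \sum_i p_{b_i}(\beta + |X|e^{-\beta}) \le \sum_i p_{b_i}(\beta_{i-1}) \le |X| e^{-\beta}.
\end{equation*}
This is the claim.

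The main obstacle is the combinatorial bookkeeping in the second paragraph. One must verify that the decomposition of $Y_{j+1}$ into the sets $a^*v$ matches exactly the hypotheses of \cref{prop:one-step-lazard-contraction}, including the case where $v$ is a bracket whose left factor must be strictly smaller than $a$. One must also handle truncation at length $L$ cleanly, since the sets $Y_j$ are infinite while the proposition sums over all $m$. I would first try to treat everything with infinite sums, relying on the fact that $\B_{\le L}$ captures all short elements and that the full $Y_j$ sums still obey the inequality. A secondary point is that $\beta_j$ must remain finite for the iteration to make sense; this is exactly what the energy bound guarantees.
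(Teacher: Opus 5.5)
Your argument is the paper's proof: the same Lazard elimination of $\B_{\le L}$, the same recursion $\beta_{j+1}=\beta_j+p_{b_{j+1}}(\beta_j)$, the same nonincreasing quantity $\beta_j+\mathrm{P}_{Y_j}(\beta_j)$ (your $E_j+\beta$), and the same final comparison $p_{b_i}(\beta+|X|e^{-\beta})\le p_{b_i}(\beta_{i-1})$ before letting $L\to+\infty$. Keep the $Y_j$ truncated to lengths $\le L$, as in your second paragraph: that is exactly what makes $b_{j+1}$ the smallest element of $Y_j$ and gives the hypotheses of \cref{prop:one-step-lazard-contraction} (dropping terms is harmless because the $p_b$ are nonnegative, not because they decrease in $\beta$). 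With the untruncated sets of \cref{def:Lazard}, which the paper's write-up uses without checking these hypotheses and which your fallback plan suggests, the argument breaks for non-length-compatible Hall sets: if $X_0<(X_0,X_1)<X_1$ (e.g.\ the Lyndon set) and $L=1$, then $(X_0,X_1)\in Y_1$ lies below $b_2=X_1$, so $(X_1,(X_0,X_1))\notin\B$ and $W$ is not even defined on it.
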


\begin{proof}
    By \cref{lem:Hall=Lazard}, $\B$ is a Lazard set.
    Fix $L\ge1$.
    Write $\B_{\le L} = \{ b_1 < \dotsb < b_{k+1} \}$.
    Define $\beta_0 := \beta$ and, for $1 \le i \le k+1$,
    \begin{equation}
        \beta_i := \beta_{i-1} + p_{b_i}(\beta_{i-1}).
    \end{equation}
    With the notations of \cref{def:Lazard}, $Y_i = \bigcup_{c \in Y_{i-1} \setminus \{ b_i \}} b_i^* c$.
    By \cref{prop:one-step-lazard-contraction},
    \begin{equation}
        \begin{split}
            \beta_i + \mathrm{P}_{Y_i}(\beta_i) 
            & = \beta_{i-1} + p_{b_i}(\beta_{i-1}) 
            + \sum_{c \in Y_{i-1} \setminus \{ b_i \}} \mathrm{P}_{b_i^* c}\left( \beta_{i-1} + p_{b_i}(\beta_{i-1}) \right) \\
            & \le \beta_{i-1} + p_{b_i}(\beta_{i-1})
            + \sum_{c \in Y_{i-1} \setminus \{ b_i \}} p_c(\beta_{i-1}) \\ 
            & = \beta_{i-1} + \mathrm{P}_{Y_{i-1}}(\beta_{i-1}).
        \end{split}
    \end{equation}
    Hence, for $0 \le i \le k+1$, we have $\beta_i \le \beta + \mathrm{P}_{Y_0}(\beta) = \beta + |X| e^{-\beta} =: \gamma$.
    Thus,
    \begin{equation}
        \mathrm{P}_{\B_{\le L}}(\gamma) 
        = \sum_{i=1}^{k+1} p_{b_i}(\gamma)
        \le \sum_{i=1}^{k+1} p_{b_i}(\beta_{i-1})
        = \beta_{k+1} - \beta \le |X| e^{-\beta}.
    \end{equation}
    Letting $L\to+\infty$ proves \eqref{eq:global-pressure-bound}.
\end{proof}

\begin{corollary}
    \label{cor:linear-bound}
    Let $X$ be finite, and let $\B$ be a Hall set over $X$. 
    There exists $C > 0$ such that, for any $b \in \B$,
    \begin{equation}
        \label{eq:bound-Wb}
        W_b \le C |b|.
    \end{equation}
\end{corollary}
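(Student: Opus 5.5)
The corollary follows from \cref{prop:global-pressure-bound} by reading off a single term of the sum. Fix any $\beta \ge 0$, for instance $\beta = 0$, and set $\gamma := \beta + |X| e^{-\beta}$; for $\beta = 0$ this is $\gamma = |X|$. Since every $p_b(\gamma)$ is nonnegative, each term of $\mathrm{P}_\B(\gamma)$ is bounded by the whole sum. Hence, for every $b \in \B$,
\begin{equation}
    \frac{1}{|b|} \exp\big(W_b - \gamma |b|\big) = p_b(\gamma) \le \mathrm{P}_\B(\gamma) \le |X| e^{-\beta}.
\end{equation}

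Taking logarithms gives
\begin{equation}
    W_b \le \gamma |b| + \ln |b| + \ln |X| - \beta .
\end{equation}
With $\beta = 0$ this reads $W_b \le |X|\,|b| + \ln |b| + \ln |X|$. We then absorb the lower-order terms using $\ln |b| \le |b|$ and $\ln |X| \le |X| \le |X|\,|b|$, which hold because $|b| \ge 1$ and $|X| \ge 1$ ($X$ is finite and nonempty). This yields $W_b \le (2|X| + 1)|b|$, so $C := 2|X|+1$ works.

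There is no genuine obstacle here, since all the work is contained in \cref{prop:global-pressure-bound}. The only points to check are that the terms $p_b$ are nonnegative, so that one term is dominated by the full convergent sum, and that the constants do not depend on $b$. One could optimize $\beta$ to improve $C$, but this is not needed for the statement.
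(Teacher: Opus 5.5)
Your proof is correct and follows the paper's argument: bound a single nonnegative term $p_b(\gamma)$ by the full sum from \cref{prop:global-pressure-bound}, then take logarithms. The only difference is that the paper chooses $\beta = \ln |X|$, so that $\mathrm{P}_\B(1+\ln|X|) \le 1$, which yields the sharper constant $C = 2 + \ln|X|$ instead of your $2|X|+1$.
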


\begin{proof}
    Applying \cref{prop:global-pressure-bound} with $\beta = \ln |X|$ proves that $\mathrm{P}_\B(1+\ln |X|) \le 1$.
    In particular, for any $b\in\B$, one has $p_b(1+\ln |X|) \le 1$, which entails \eqref{eq:bound-Wb} with $C := 2 + \ln |X|$ using $\ln |b| \le |b|$.
\end{proof}

\section{Convergence of Sussmann's infinite product}
\label{sec:proof}

Combining the arguments of the previous sections, we prove \cref{thm:main}.

In this section, for $\delta > 0$, $B_\delta$ denotes the open ball in $\R^d$ of radius $\delta$ centered at $0$.

\subsection{Infinite products of flows}
\label{sec:proof-def}

To define infinite products of flows of vector fields, we use the following notions.

\begin{definition}
    Let $\Omega$ be an open subset of $\R^d$, $J = \{ j_1 < \dotsb < j_n \}$ a finite ordered set and $(f_j)_{j \in J} \in (C^1(\Omega;\R^d))^{J}$ a family of vector fields.
    When it makes sense, we define
    \begin{equation}
        \left(\overset{\longrightarrow}{\prod_{j \in J}} e^{f_j}\right)(p)
        := e^{f_{j_1}} \circ \dotsb \circ e^{f_{j_n}} (p). 
    \end{equation}
\end{definition}

\begin{definition}
    Let $\Omega$ be an open subset of $\R^d$, $J$ an ordered set and $(f_j)_{j \in J} \in (C^1(\Omega;\R^d))^{J}$ a family of vector fields.
    Given an open subset $\Omega' \subset \Omega$ and $g \in C^0(\Omega';\R^d)$, we say that the ordered product of the $e^{f_j}$ over $J$ converges towards $g$ on $\Omega'$ when, for any $\varepsilon > 0$, there exists a finite subset $J_\varepsilon \subset J$ such that, for any finite subset $J_\varepsilon \subset J' \subset J$, and any $p \in \Omega'$, one has
    \begin{equation}
        \left| g(p) - \left(\overset{\longrightarrow}{\prod_{j \in J'}} e^{f_j}\right)(p) \right| \le \varepsilon.
    \end{equation}
    When such a $g$ exists, it is unique, and we write
    \begin{equation}
        g = \overset{\longrightarrow}{\prod_{j \in J}} e^{f_j}.
    \end{equation}
\end{definition}

One has the following sufficient condition for convergence (see \cite[Lemma 134]{BeauchardLeBorgneMarbach2023}).

\begin{lemma}
    \label{lem:crit-prod-conv}
    Let $J$ be an ordered set, $\delta > 0$ and $(f_j)_{j \in J} \in (C^1(B_{2\delta};\R^d))^J$.
    Assume that
    \begin{equation}
        \sum_{j \in J} \norm{f_j}_{C^0(B_{2\delta};\R^d)} \le \delta
        \quad \text{and} \quad
        \sum_{j \in J} \norm{f_j}_{C^1(B_{2\delta};\R^d)} < + \infty.
    \end{equation}
    Then the ordered product of the $e^{f_j}$ over $J$ converges on $B_\delta$.
\end{lemma}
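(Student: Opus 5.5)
The plan is to show that the finite ordered products form a uniformly Cauchy net on $B_\delta$, using a telescoping estimate that inserts one factor at a time. Completeness of $C^0$ under uniform convergence then gives the limit $g$. Set $S_0 := \sum_{j} \norm{f_j}_{C^0(B_{2\delta})} \le \delta$ and $L_j := \norm{Df_j}_{C^0(B_{2\delta})}$, so that $\Lambda := \exp(\sum_j L_j) < +\infty$ by the second assumption.

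\emph{Step 1 (well-posedness and confinement).} For a vector field $h \in C^1(B_{2\delta};\R^d)$ and a point $q$ with $|q| + \norm{h}_{C^0} < 2\delta$, the trajectory of $\dot y = h(y)$ from $q$ moves at speed at most $\norm{h}_{C^0}$. Hence it exists on $[0,1]$, stays in the closed ball of radius $|q| + \norm{h}_{C^0}$, and satisfies $|e^{h}(q) - q| \le \norm{h}_{C^0}$. Now take a finite $J' = \{j_1 < \dotsb < j_n\}$ and $p \in B_\delta$, and apply the factors from the innermost one $e^{f_{j_n}}$ outward. By induction, each intermediate point has norm less than $|p| + \sum_{k} \norm{f_{j_k}}_{C^0} < \delta + S_0 \le 2\delta$. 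So every partial composition, including those over subsets of $J'$, is well defined on $B_\delta$ and takes values in $B_{2\delta}$.

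\emph{Step 2 (Lipschitz bound).} Consider two trajectories of $h$ that both stay in the convex set $B_{2\delta}$. Grönwall gives $|e^{h}(q) - e^{h}(q')| \le e^{\norm{Dh}_{C^0}} |q - q'|$. It follows that any composition $e^{f_{j_1}} \circ \dotsb \circ e^{f_{j_\ell}}$ over a subfamily, restricted to points whose whole orbit stays in $B_{2\delta}$ (which Step 1 guarantees), is Lipschitz with constant at most $\Lambda$.

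\emph{Step 3 (insertion estimate and Cauchy property).} Let $J' \subset J''$ be finite. I remove the elements of $J'' \setminus J'$ one at a time. Each removal compares $A \circ e^{f_k} \circ B (p)$ with $A \circ B(p)$, where $A$ and $B$ are partial compositions. By Step 1, $|e^{f_k}(B p) - B p| \le \norm{f_k}_{C^0}$. By Step 2, applying $A$ multiplies this by at most $\Lambda$. Telescoping over the removed elements gives
\[
\Bigl| \Bigl(\overset{\longrightarrow}{\prod_{j \in J''}} e^{f_j}\Bigr)(p) - \Bigl(\overset{\longrightarrow}{\prod_{j \in J'}} e^{f_j}\Bigr)(p) \Bigr| \le \Lambda \sum_{k \in J'' \setminus J'} \norm{f_k}_{C^0(B_{2\delta})}.
\]
Given $\varepsilon > 0$, choose a finite $J_\varepsilon$ with $\sum_{k \notin J_\varepsilon} \norm{f_k}_{C^0} \le \varepsilon/(2\Lambda)$. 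For any two finite sets $J', J''$ containing $J_\varepsilon$, compare each of them with $J' \cup J''$. This shows the two products differ by at most $\varepsilon$, uniformly in $p \in B_\delta$.

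The net is therefore uniformly Cauchy, so it converges uniformly to a continuous $g$ on $B_\delta$, which is the required convergence. The main obstacle is the domain bookkeeping in Steps 1--2. One must make sure that every intermediate trajectory stays strictly inside $B_{2\delta}$, so that the sup norms and Lipschitz constants on $B_{2\delta}$ really apply. The strict inequality $|p| < \delta$ together with $S_0 \le \delta$ is exactly what provides this margin.
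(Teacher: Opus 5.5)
Your proof is correct. The paper does not prove this lemma itself; it only cites \cite[Lemma 134]{BeauchardLeBorgneMarbach2023}. Your argument is the standard route: confine all intermediate trajectories in $B_{2\delta}$, use the displacement bound $|e^{h}(q)-q|\le\norm{h}_{C^0}$ and the uniform Lipschitz bound $\exp(\sum_j\norm{f_j}_{C^1})$ for sub-products, and telescope to show that the net of finite products is uniformly Cauchy. These are the same two estimates the paper uses in Step~5 of the proof of \cref{thm:main} (see \eqref{eq:Lip-GL}).
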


\subsection{Useful lemmas}

The following conjugation result for real-analytic vector fields is proved in \cite[Lemma 90, item 3]{BeauchardLeBorgneMarbach2023}.

\begin{lemma}
    \label{lem:conj}
    Let $\delta, r > 0$ and $f, g \in C^{\omega,3r}(B_{3\delta};\R^d)$ such that $\norm{f}_{C^0} < \delta$ and $\norm{f}_{3r} < \frac{r}{6}$.
    Then
    \begin{equation}
        \bigl((e^{-f})_* g\bigr) (p)
        = \sum_{k=0}^{+\infty} \frac{1}{k!} \ad_f^k(g)(p),
    \end{equation}
    for all $p \in B_{2\delta}$, where the series converges in $C^{\omega,2r}(B_{3\delta};\R^d)$.
\end{lemma}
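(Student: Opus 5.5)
The plan is to compare the pushforward with its Taylor expansion in an auxiliary time variable and to show that the Taylor remainder vanishes. This avoids any uniqueness argument for an evolution equation in a scale of analytic spaces.

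\textbf{Convergence of the series.} Applying \cref{lem:lie} with radius $2r$ and loss $\rho = r$ gives
\begin{equation*}
    \frac{1}{k!}\norm{\ad_f^k(g)}_{2r} \le \frac{1}{k!}\left(\frac{2k}{r}\right)^k \norm{f}_{3r}^k \norm{g}_{3r}.
\end{equation*}
Since $k^k/k! \le e^k$ and $\norm{f}_{3r} < r/6$, the right-hand side is at most $(e/3)^k\norm{g}_{3r}$. So the series converges geometrically in $C^{\omega,2r}(B_{3\delta};\R^d)$, and in particular uniformly on $B_{3\delta}$.

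\textbf{The auxiliary pushforward.} For $s\in[0,1]$, let $\phi_s$ be the flow of $f$. Because $\norm{f}_{C^0}<\delta$, every trajectory starting in $B_{2\delta}$ stays in $B_{3\delta}$ up to time $1$, in either time direction. Hence $G(s) := (e^{-sf})_* g$ is well-defined at each $p\in B_{2\delta}$. It has the form $D\phi_{\mp s}(\phi_{\pm s}(p))\, g(\phi_{\pm s}(p))$, with the sign fixed by the convention for $(e^{-f})_*$. The classical identity $\frac{d}{ds}(e^{-sf})_*h = (e^{-sf})_*[f,h]$ holds for smooth $h$; it is the derivative of the pushforward being the Lie derivative, with the paper's convention $[f,g] = Dg\cdot f - Df\cdot g$, and I will check the sign against the $k=1$ term. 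Iterating it gives
\begin{equation*}
    G^{(k)}(s) = (e^{-sf})_*\ad_f^k(g) \quad \text{and} \quad G^{(k)}(0)=\ad_f^k(g).
\end{equation*}
Taylor's formula with integral remainder then yields, for each $N$,
\begin{equation*}
    G(1)(p) - \sum_{k<N}\frac{1}{k!}\ad_f^k(g)(p) = \int_0^1 \frac{(1-s)^{N-1}}{(N-1)!}\bigl((e^{-sf})_*\ad_f^N(g)\bigr)(p)\dd s .
\end{equation*}

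\textbf{Bounding the remainder.} I need a bound on the pushforward $(e^{-sf})_*h$ that is uniform in $s\in[0,1]$ and $p\in B_{2\delta}$. Its value is a Jacobian of the flow applied to $h$ evaluated at a point of $B_{3\delta}$. The Jacobian solves the linearized equation $\dot J = Df(\phi_s)J$. \Cref{lemma:gradient.estimate} gives $\norm{Df}_{C^0} \le \norm{f}_{3r}/(3r) \le 1/18$, so Gronwall bounds the Jacobian (and its inverse) by $e^{1/18}$. Thus the remainder is at most $\frac{1}{N!}e^{1/18}\sup_{B_{3\delta}}|\ad_f^N g|$, which by the first step is at most $C(e/3)^N\norm{g}_{3r}$ and tends to $0$ as $N\to\infty$. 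This proves the pointwise identity on $B_{2\delta}$.

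\textbf{Expected obstacle.} I expect the main difficulty to be making the differentiation identity rigorous, with the correct sign and domains. This includes checking that all flows involved stay inside $B_{3\delta}$ and that $G$ is $C^N$ in $s$. Since $f$ and $g$ are analytic and the flow stays in the domain, this should be routine via smooth dependence of flows on time and initial data.
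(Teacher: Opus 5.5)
Your proof is correct. It combines three ingredients:
\begin{itemize}
\item the geometric bound $\frac{1}{k!}\norm{\ad_f^k(g)}_{2r}\le (e/3)^k\norm{g}_{3r}$ from \cref{lem:lie} with loss $\rho=r$;
\item Taylor's formula with integral remainder for $s\mapsto (e^{-sf})_*g$;
\item the $N$-independent Jacobian bound $e^{1/18}$ along trajectories that stay in $B_{3\delta}$.
\end{itemize}
The sign check you postponed works out. With $\Phi_*g=(D\Phi\, g)\circ\Phi^{-1}$ and $[f,g]=Dg\cdot f-Df\cdot g$, one has $\frac{d}{ds}(e^{-sf})_*h=(e^{-sf})_*[f,h]$. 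The paper does not reprove this lemma but quotes it from \cite[Lemma 90, item 3]{BeauchardLeBorgneMarbach2023}, so there is no in-paper argument to compare against. Your Taylor-remainder argument is a standard way to establish it and is consistent with the constants $r/6$ and $3r\to 2r$ in the statement.
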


\subsection{Proof of the main theorem}

We now prove the main theorem.
The proof scheme is inspired by \cite{Sussmann1985} and \cite[Section 4.5]{BeauchardLeBorgneMarbach2023}, taking advantage of the global pressure bound of \cref{sec:univ-bound}.

\begin{proof}[Proof of \cref{thm:main}]
    Since $I$ is finite and each $f_i$ is real-analytic on a neighborhood of $0$, there exist $r,\delta \in (0,1]$ such that $f_i \in C^{\omega,4r}(B_{3\delta};\R^d)$ for all $i \in I$.
    Set $M_f := \max_{i \in I} \norm{f_i}_{4r}$ and $\gamma := 1 + \ln |X|$.
    Fix $\eta>0$ small enough so that
    \begin{equation}
        \label{eq:eta-smallness}
        (1+M_f)\eta<\delta
        \quad\text{and}\quad
        \frac{2eM_f}{r}\eta
        <e^{-\gamma}\min\left(\delta,\frac r6\right).
    \end{equation}
    From now on, assume that $p \in \R^d$ and $u \in L^1((0,T);\R^I)$ are such that $|p| \leq \eta$ and $\norm{u}_{L^1} \leq \eta$. 

    \medskip
    \noindent
    \emph{Step 1: We prove that the infinite product is well-defined.}
    By \cref{prop:global-pressure-bound} with $\beta = \ln |X|$,
    \begin{equation}
        \label{eq:Pgamma}
        \mathcal{P}_\gamma
        :=
        \sum_{b\in\B}
        \frac1{|b|}
        \exp\bigl(W_b-\gamma|b|\bigr) \le 1.
    \end{equation}
    Let $\Xi$ be as in \eqref{eq:Xi}.
    In particular, $\Xi \le \norm{u}_{L^1} \le \eta$.
    Combining the coordinate estimate of \cref{prop:Gammab} with the Lie-bracket estimate of \cref{prop:Fb}, for all $b\in\B$ and $t\in[0,T]$,
    \begin{equation}
        \label{eq:xib-fb-Wb}
        \norm{\xi_b(t,u)f_b}_{3r}
        \le 
        \frac{e^{\Gamma_b}}{|b|}\Xi(t)^{|b|}
        e^{F_b}
        \left(\frac{2e}{r}\right)^{|b|-1}
        M_f^{|b|} 
        \le 
        \frac1{|b|}
        e^{W_b}
        \left(\frac{2eM_f}{r} \eta\right)^{|b|}.
    \end{equation}
    Combining \eqref{eq:Pgamma} and \eqref{eq:eta-smallness}, we obtain
    \begin{equation}
        \sum_{b\in\B}\norm{\xi_b(t,u)f_b}_{3r}
        < \min \left(\delta,\frac r6\right).
    \end{equation}
    Recalling \eqref{eq:def.analytic.r}, we thus have
    \begin{equation}
        \label{eq:xib-fb-C1-sum}
        \sum_{b\in\B}\norm{\xi_b(t,u)f_b}_{C^0(B_{3\delta})} \le \delta 
        \quad \text{and} \quad 
        \sum_{b\in\B}\norm{\xi_b(t,u)f_b}_{C^1(B_{3\delta})} \le \frac{\delta}{r} < +\infty. 
    \end{equation}
    Hence, for any $t \in [0,T]$, the infinite product of \eqref{eq:thm.main} converges on $B_{\delta}$ by \cref{lem:crit-prod-conv}.

    We shall also use a similar estimate for the differentiated coordinates.
    The derivative estimate in \cref{prop:Gammab} and the Lie-bracket estimate in \cref{prop:Fb} give
    \begin{equation}
        \label{eq:bound-dotxi-fb}
        \int_0^t
        |\dot\xi_b|
        \|f_b\|_{3r}
        \dd s
        \le 
        e^{\Gamma_b+F_b}
        \left(\frac{2e}{r}\right)^{|b|-1}
        M_f^{|b|}
        \int_0^t
        \Xi^{|b|-1}\dot\Xi =
        \frac{r}{2e}
        \frac1{|b|}
        e^{W_b} \left(\frac{2eM_f}{r} \eta\right)^{|b|}.
    \end{equation}
    The right-hand side is summable over $b\in\B$ by \eqref{eq:Pgamma} and \eqref{eq:eta-smallness}.

    \medskip
    \noindent
    \emph{Step 2: We prove that the trajectory is defined on $[0,T]$ and remains in $B_\delta$.}
    Indeed, as long as the state remains in $B_{2\delta}$, using \eqref{eq:eta-smallness},
    \begin{equation}
        |x(t;u,p)| 
        \le |p| + \int_0^t \sum_{i \in I} |u_i| \norm{f_i}_{C^0(B_\delta)}
        \le |p| + M_f \norm{u}_{L^1} \le \delta.
    \end{equation}
    The usual continuation argument shows that $x(t;u,p)$ is defined on $[0,T]$ and remains in $B_\delta$.

    \medskip
    \noindent
    \emph{Step 3: We reduce the result to the convergence of bounded length products.}
    Let $G(t)$ denote the infinite product of the right-hand side of \eqref{eq:thm.main}.
    For $L \ge 1$, let $G_L(t)$ be the finite ordered product over~$\B_{\le L}$.
    Since the finite sets $\B_{\le L}$ are cofinal among finite subsets of~$\B$, the convergence of $G(t)$ already proved implies, for all $t \in [0,T]$, $G_L(t) \to G(t)$ in $C^0(B_\delta;\R^d)$ as $L \to +\infty$.
    Hence it suffices to prove that $G_L(t)(p) \to x(t;u,p)$.
    
    \medskip
    \noindent
    \emph{Step 4: We construct auxiliary systems by Lazard elimination.}
    Let $L \ge 1$.
    By \cref{lem:Hall=Lazard}, $\B$ is a Lazard set.
    Using the notations of \cref{def:Lazard}, write $\B_{\le L} = \{ b_1 < \dotsb < b_{k+1} \}$ and let $Y_0, \dotsc, Y_{k+1}$ be the associated Lazard elimination sets.
    We use the standard property of Lazard elimination for Hall sets that each $Y_j$ is a subset of $\B$.
    In particular, $Y_{k+1}\subset\B$ and $\B_{\le L}\cap Y_{k+1}=\emptyset$, hence
    $Y_{k+1}\subset \{b\in\B: |b|>L\}$.

    Define successively
    \begin{equation}
        z_0(s):=x(s;u,p),
    \end{equation}
    and, for $1\le j\le k+1$,
    \begin{equation}
        \label{eq:zj-def}
        z_j(s)
        :=
        e^{-\xi_{b_j}(s,u)f_{b_j}}
        \bigl(z_{j-1}(s)\bigr).
    \end{equation}
    Since $x(s;u,p)\in B_\delta$ and $\sum_{b\in\B}\norm{\xi_b(s,u)f_b}_{C^0(B_{3\delta})}\le \delta$, all these curves remain in $B_{2\delta}$.
    Indeed, during each inverse flow the displacement is bounded by the $C^0(B_{3\delta})$ norm of the corresponding vector field, and the cumulative displacement is controlled by \eqref{eq:xib-fb-C1-sum}.
    In particular, the intermediate flow trajectories remain in $B_{3\delta}$, so the following computations are legitimate.

    We claim that, for every $0\le j\le k+1$,
    \begin{equation}
        \label{eq:zj-ODE}
        \dot z_j(s)
        =
        \sum_{b\in Y_j}
        \dot\xi_b(s,u)f_b(z_j(s)),
        \quad \text{and} \quad
        z_j(0)=p,
    \end{equation}
    for a.e.\ $s\in(0,T)$, where the series is absolutely convergent in $L^1((0,T);C^1(B_{2\delta}))$.

    For $j=0$, this is exactly the original differential equation \eqref{eq:syst}, because $Y_0=X$ and $\dot{\xi}_{X_i} = u_i$.
    Assume \eqref{eq:zj-ODE} holds at step $j-1$. 
    From \eqref{eq:zj-def}, the chain rule gives, for a.e.\ $s\in(0,T)$,
    \begin{equation}
        \label{eq:zj-ODE-conj}
        \dot z_j(s)
        =
        -\dot\xi_{b_j}(s,u)f_{b_j}(z_j(s)) +
        \sum_{v\in Y_{j-1}}
        \dot\xi_v(s,u)
        \left(
        e^{-\xi_{b_j}(s,u)f_{b_j}}
        \right)_*f_v(z_j(s)).
    \end{equation}
    The term $v=b_j$ cancels the first term, because the flow of $f_{b_j}$ preserves $f_{b_j}$.
    Moreover, the bounds above ensure that $\norm{\xi_{b_j}(s,u) f_{b_j}}_{C^0} < \delta$ and $\norm{\xi_{b_j}(s,u) f_{b_j}}_{3r} < r/6$.
    Thus, by \cref{lem:conj}, for every $v\in Y_{j-1}\setminus\{b_j\}$,
    \begin{equation}
        \label{eq:conj-xi}
        \left(
        e^{-\xi_{b_j}(s,u)f_{b_j}}
        \right)_*f_v
        =
        \sum_{m \ge 0}
        \frac{\xi_{b_j}(s,u)^m}{m!}
        f_{\ad_{b_j}^m(v)}
    \end{equation}
    with convergence in $C^{\omega,2r}$. 
    Substituting \eqref{eq:conj-xi} in \eqref{eq:zj-ODE-conj} and recalling \cref{def:xi} and the definition of $Y_j$ proves \eqref{eq:zj-ODE}.
    The absolute convergence required in the claim follows from \eqref{eq:bound-dotxi-fb}, since each $Y_j$ is contained in $\B$.

    \medskip
    \noindent
    \emph{Step 5: We prove the claimed convergence.}
    We now apply \eqref{eq:zj-ODE} at the final step $j=k+1$.
    Since $z_{k+1}(0)=p$, \eqref{eq:zj-ODE}, the inclusion $\B_{\le L} \cap Y_{k+1} = \emptyset$, and \eqref{eq:bound-dotxi-fb} yield
    \begin{equation}
        \label{eq:zk-p}
            |z_{k+1}(t)-p|
            \le 
            \int_0^t
            \sum_{b\in Y_{k+1}}
            |\dot\xi_b|\|f_b\|_{C^0(B_{2\delta})}
            \le 
            \frac{r}{2e}
            \sum_{\substack{b\in\B\\ |b|>L}}
            \frac1{|b|}
            e^{W_b}
            \left(\frac{2eM_f}{r}\eta\right)^{|b|}.
    \end{equation}
    The last expression tends to $0$ as $L\to+\infty$, as the tail of a convergent positive series.

    Finally, the finite product $G_L(t)$ has a Lipschitz constant bounded independently of $L$. 
    Indeed, by \eqref{eq:xib-fb-C1-sum},
    \begin{equation}
        \label{eq:Lip-GL}
        \operatorname{Lip}(G_L(t))
        \le 
        \exp\left(
        \sum_{b\in\B_{\le L}}
        \|\xi_b(t,u)f_b\|_{C^1(B_{3\delta})}
        \right)
        \le 
        \exp\left(\frac{\delta}{r}\right).
    \end{equation}
    By construction, $z_{k+1}(t)$ is obtained by applying the inverse finite product to $x(t;u,p)$. Hence
    \begin{equation}
        x(t;u,p)
        =
        G_L(t)\bigl(z_{k+1}(t)\bigr).
    \end{equation}
    Therefore, by \eqref{eq:zk-p} and \eqref{eq:Lip-GL},
    \begin{equation}
        \begin{aligned}
            |x(t;u,p)-G_L(t)(p)|
            =
            |G_L(t)(z_{k+1}(t))-G_L(t)(p)|
            \le 
            \exp\left(\frac{\delta}{r}\right)|z_{k+1}(t)-p|
        \end{aligned}
    \end{equation}
    This proves that $G_L(t)(p) \to x(t;u,p)$, which concludes the proof of \eqref{eq:thm.main}.
\end{proof}

\appendix

\section{Proof of the Lyndon length lemma}
\label{sec:lyndon-proof}

In this section, we prove \cref{lem:lyndon} of \cref{sec:lyndon}.
Let $\varepsilon$ denote the empty word.

We shall use the following standard facts about Lyndon words.
First, a nonempty word $f$ is Lyndon if and only if it is strictly smaller than each of its nonempty proper suffixes.
Second, if $u,v\in\Lyn$ and $u\prec v$, then $uv\in\Lyn$.

\begin{lemma}
    \label{lem:lyndon_prefix}
    Let $v, w \in X^*$ and $x \prec z \in X$.
    If $\ell = v x w \in \Lyn$, then $v z \in \Lyn$.
\end{lemma}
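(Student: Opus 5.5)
We will use the characterization recalled just above: a nonempty word is Lyndon if and only if it is strictly smaller (for $\prec$) than each of its nonempty proper suffixes. So we must show that $vz \prec s$ for every nonempty proper suffix $s$ of $vz$. Such a suffix has the form $s = v'' z$, where $v = v' v''$ with $v' \neq \varepsilon$; the case $v'' = \varepsilon$, i.e.\ $s = z$, is included. The plan is to compare $vz$ with $v''z$ by going through the corresponding suffix $v''xw$ of $\ell$. Since $v' \neq \varepsilon$, the word $v''xw$ is a nonempty proper suffix of $\ell$, and the Lyndon property of $\ell$ gives $\ell = v x w \prec v'' x w$.

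The key step is to locate the first position where $vxw$ and $v''xw$ differ. Note that $v''xw$ is strictly shorter than $vxw$, so it cannot be a prefix-or-equal witness in the wrong direction. Hence $\ell \prec v''xw$ means one of two things: either $vxw$ is a proper prefix of $v''xw$, which is impossible by length; or there is a first index $k$ at which the letters differ, with the letter of $\ell$ smaller. We will split according to whether $k \le |v''|$ or $k > |v''|$.

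In the case $k \le |v''|$, the first $k$ letters of $v''xw$ are letters of $v''$, and those of $vxw$ are letters of $v$. Indeed, $k \le |v''| < |v|$, so nothing from $x$ or $w$ enters. The same discrepancy then appears between $vz$ and $v''z$ at index $k$, and therefore $vz \prec v''z$.

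In the case $k > |v''|$, the words $v$ and $v''$ agree on their first $|v''|$ letters, so $v''$ is a prefix of $v$. Write $v = v'' y t$, where $y$ is a letter; this is possible since $|v| > |v''|$. Then $vz$ and $v''z$ agree up to position $|v''|$, and at position $|v''|+1$ they carry $y$ and $z$ respectively. On the $\ell$ side, position $|v''|+1$ carries $y$ in $vxw$ and $x$ in $v''xw$. Since $k \ge |v''|+1$ is the first difference and at $k$ the letter of $\ell$ is smaller, we get $y \preceq x$. Combined with $x \prec z$, this gives $y \prec z$, hence $vz \prec v''z$.

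The main care point is this second case: it is where the hypothesis $x \prec z$ is used. It relies on the observation that the first difference cannot occur before $|v''|+1$ without falling into the first case. With both cases handled, $vz$ is smaller than all its proper suffixes. It is also nonempty, since it contains $z$. Therefore $vz \in \Lyn$.
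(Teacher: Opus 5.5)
Your proof is correct. It shares the paper's skeleton: the suffix characterization of Lyndon words, applied to each nonempty proper suffix $s = v''z$ of $vz$ through the Lyndon inequality $\ell \prec v''xw$. The difference lies in how you pass from $\ell \prec v''xw$ to $vz \prec v''z$.

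You locate the first mismatch between $\ell$ and $v''xw$, and split according to whether it falls inside $v''$ or at or after the position of $x$. The paper does no position bookkeeping. It chains $v \prec \ell \prec v''xw \prec v''z = s$ (a proper prefix is smaller, then the Lyndon property of $\ell$, then $x \prec z$) to get $v \prec s$. It then notes that $s \preceq vz$ together with $v \prec s$ would force $v$ to be a strict prefix of $s$. This is impossible, since $|s| = |v''|+1 \le |v|$.

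The paper's route is shorter and purely order-theoretic. Yours is more explicit, and it shows that the hypothesis $x \prec z$ is only needed in your second case (when $v''$ is a prefix of $v$). Your first case follows from the Lyndon property of $\ell$ alone.
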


\begin{proof}
    If $v = \varepsilon$, then $v z = z \in \Lyn$.
    We may therefore assume that $v \neq \varepsilon$.

    Let $s = v_2 z$ with $v_2 \in X^*$ be a nonempty proper suffix of $v z$, and write $v = v_1 v_2$ with $v_1\in X^+$. 

    Since $v_2 x w$ is a nonempty proper suffix of $\ell$ and $x \prec z$, we have $\ell \prec v_2 x w \prec v_2 z = s$.

    Since $v$ is a proper prefix of $\ell$, we have $v\prec \ell$, and therefore $v\prec s$.

    Suppose, for contradiction, that $v z\succeq s$.
    Then $v \prec s \preceq v z$.
    Since $v z$ is obtained from $v$ by appending the single letter $z$, this forces $v$ to be a strict prefix of $s$.
    Hence $|v|<|s|$. 
    But $|v|=|v_1|+|v_2|\ge |v_2|+1=|s|$, a contradiction.

    Thus $v z \prec s$ for every nonempty proper suffix $s$ of $v z$, and therefore $v z \in\Lyn$.
\end{proof}

\begin{proposition}
    \label{prop:lyndon_description}
    Let $\ell \in \Lyn$, and let $\ell_1$ be its longest proper Lyndon prefix.
    Write $\ell = \ell_1\ell_2$.
    Assume that $\ell_1$ is not a prefix of $\ell_2$.
    Then $\ell_2 = \bar{\ell}_1 z$ for some possibly empty strict prefix $\bar{\ell}_1 \in X^*$ of $\ell_1$ and some letter $z \in X$.
\end{proposition}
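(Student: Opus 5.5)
We compare $\ell_2$ with $\ell_1$ letter by letter, using the two standard facts recalled above and \cref{lem:lyndon_prefix}.

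\textbf{Locating the first mismatch.} Since $\ell \in \Lyn$ and $\ell_2$ is a nonempty proper suffix, we have $\ell \prec \ell_2$. Because $\ell_1$ is not a prefix of $\ell_2$, two cases could occur.
\begin{itemize}
\item[(i)] $\ell_2$ is a proper prefix of $\ell_1$. Then $\ell_2 \prec \ell_1 \prec \ell$, since $\ell_1$ is a proper prefix of $\ell$. This contradicts $\ell \prec \ell_2$.
\item[(ii)] There is a first index where $\ell_1$ and $\ell_2$ differ. Write $\ell_1 = \bar\ell_1 x w$ and $\ell_2 = \bar\ell_1 z w'$ with letters $x \neq z$. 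Here $\bar\ell_1$ is a strict prefix of $\ell_1$, possibly empty.
\end{itemize}
In case (ii), $\ell$ and $\ell_2$ share the prefix $\bar\ell_1$ and then read $x$ and $z$ respectively. So $\ell \prec \ell_2$ forces $x \prec z$.

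\textbf{Shortening via maximality.} It remains to show $w' = \varepsilon$. Apply \cref{lem:lyndon_prefix} to $\ell_1 = \bar\ell_1 x w \in \Lyn$ with $x \prec z$: this gives $\bar\ell_1 z \in \Lyn$. Moreover $\bar\ell_1 z \preceq \ell_2$, being a prefix of it.

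\textbf{Main step.} Suppose $w' \neq \varepsilon$. We want a proper Lyndon prefix of $\ell$ strictly longer than $\ell_1$, contradicting the maximality of $\ell_1$. The natural candidate is $\ell_1 \bar\ell_1 z$, a proper prefix of $\ell = \ell_1 \bar\ell_1 z w'$.

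We have $\ell_1, \bar\ell_1 z \in \Lyn$. Also $\ell_1 \prec \bar\ell_1 z$, since they agree on $\bar\ell_1$ and then read $x \prec z$. The second standard fact then gives $\ell_1 \bar\ell_1 z \in \Lyn$.

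Its length exceeds $|\ell_1|$. It is a proper prefix of $\ell$ exactly when $w' \neq \varepsilon$. This contradicts the choice of $\ell_1$, so $w' = \varepsilon$ and $\ell_2 = \bar\ell_1 z$.

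The main obstacle is identifying this candidate and checking the ordering hypothesis of the concatenation fact. Once $x \prec z$ is established, both checks are immediate. Some care is also needed with the edge case $\bar\ell_1 = \varepsilon$, where the candidate is $\ell_1 z$; the argument goes through unchanged.
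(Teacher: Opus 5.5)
Your proof is correct and is essentially the paper's argument. You locate the first mismatch $\bar\ell_1 x$ versus $\bar\ell_1 z$ with $x \prec z$, obtain $\bar\ell_1 z \in \Lyn$ from \cref{lem:lyndon_prefix}, and exclude any leftover $w'$ because $\ell_1\bar\ell_1 z$ would then be a longer proper Lyndon prefix of $\ell$. The only cosmetic difference is that you derive $x \prec z$ from $\ell \prec \ell_2$ and rule out the case where $\ell_2$ is a proper prefix of $\ell_1$ explicitly, whereas the paper quotes $\ell_1 \prec \ell_2$ from the standard factorization and takes the shortest prefix $p$ of $\ell_2$ with $\ell_1 \prec p$.
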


\begin{proof}
    Recall that $\ell_2\in\Lyn$, and that $\ell_1 \prec \ell_2$.
    Let $p$ be the shortest prefix of $\ell_2$ such that $\ell_1 \prec p$.
    Such a prefix exists because $\ell_1 \prec \ell_2$.

    Since $\ell_1$ is not a prefix of $\ell_2$, the comparison $\ell_1 \prec p$ occurs at the first letter where $\ell_1$ and $\ell_2$ differ.
    Hence there exist a possibly empty strict prefix $\bar{\ell}_1 \in X^*$ of $\ell_1$, letters $x, z \in X$, and a word $r \in X^*$, such that $\ell_1 = \bar{\ell}_1 x r$, $p = \bar{\ell}_1 z$, $x \prec z$.
    By \cref{lem:lyndon_prefix}, $p = \bar{\ell}_1 z \in \Lyn$.
    
    If $p$ were a proper prefix of $\ell_2$, then $\ell_1 p$ would be a proper prefix of $\ell$.
    Moreover, since $\ell_1,p\in\Lyn$ and $\ell_1\prec p$, the word $\ell_1 p$ would be Lyndon.
    This contradicts the maximality of $\ell_1$ as the longest proper Lyndon prefix of $\ell$.
    Therefore $p=\ell_2$, and the claim follows.
\end{proof}

The following result can be deduced from \cite[Lemma 7.14]{Reutenauer1993}, the formulation given here is better suited for our purpose of proving \cref{lem:lyndon}.

\begin{corollary}
    \label{cor:lyndon_power_prefix}
    Let $\ell \in \Lyn$, and let $\ell_1$ be its longest proper Lyndon prefix.
    Then there exist an $m \geq 1$, a possibly empty strict prefix $\bar{\ell}_1 \in X^*$ of $\ell_1$, and a letter $z \in X$ such that $\ell = \ell_1^m \bar{\ell}_1 z$.
\end{corollary}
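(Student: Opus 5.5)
I will prove \cref{cor:lyndon_power_prefix} by strong induction on the length of $\ell$, peeling off copies of $\ell_1$ from the front of $\ell_2$ and using \cref{prop:lyndon_description} once no further copy can be peeled off. Since $\ell \in \Lyn$ is not a letter (otherwise it has no proper Lyndon prefix, so we assume $|\ell| \ge 2$), we write $\ell = \ell_1 \ell_2$ with $\ell_2 \in \Lyn$ by the standard factorization result \cite{ChenFoxLyndon1958}. There are two cases.

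\emph{Case 1: $\ell_1$ is not a prefix of $\ell_2$.} Then \cref{prop:lyndon_description} directly gives $\ell_2 = \bar\ell_1 z$ with $\bar\ell_1$ a strict prefix of $\ell_1$ and $z \in X$. So $\ell = \ell_1^1 \bar\ell_1 z$, and the statement holds with $m = 1$.

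\emph{Case 2: $\ell_1$ is a prefix of $\ell_2$.} Write $\ell_2 = \ell_1 w$. Since $\ell_2 \in \Lyn$ and $\ell_2 \neq \ell_1$ (lengths differ, as $\ell_1 \prec \ell_2$), $w$ is nonempty. I would show that $\ell_1$ is also the longest proper Lyndon prefix of $\ell_2$, and then apply the induction hypothesis to the shorter Lyndon word $\ell_2$. This yields $\ell_2 = \ell_1^{m'} \bar\ell_1 z$, hence $\ell = \ell_1^{m'+1}\bar\ell_1 z$.

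The main obstacle is the identification of the longest proper Lyndon prefix of $\ell_2$. It is certainly $\ell_1$ or longer, since $\ell_1$ is a proper Lyndon prefix of $\ell_2$. Suppose $q$ is a proper Lyndon prefix of $\ell_2$ with $|q| > |\ell_1|$. Then $q = \ell_1 q'$ with $q'$ nonempty. Any prefix $p$ of $\ell$ with $|\ell_1| < |p| < |\ell|$ must fail to be Lyndon, by the maximality of $\ell_1$.

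To get a contradiction, I would compare $q$ with the prefix $\ell_1 q$ of $\ell$, which is proper because $q$ is a proper prefix of $\ell_2$. If $\ell_1 \prec q$, then $\ell_1 q \in \Lyn$ by the stated fact that $u, v \in \Lyn$ and $u \prec v$ imply $uv \in \Lyn$. This contradicts maximality since $|\ell_1 q| > |\ell_1|$. And $\ell_1 \prec q$ indeed holds, because $\ell_1$ is a proper prefix of $q$ and hence lexicographically smaller.

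So in Case 2 the longest proper Lyndon prefix of $\ell_2$ is exactly $\ell_1$, which completes the induction. The base case is covered by the smallest lengths: for them Case 2 cannot occur, since it would require $|\ell_2| > |\ell_1|$ with $\ell_2$ itself admitting a proper Lyndon prefix.
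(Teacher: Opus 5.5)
Your proof is correct and is essentially the paper's argument. The key step is the same observation the paper uses: a proper Lyndon prefix $q$ of $\ell_2$ longer than $\ell_1$ would make $\ell_1 q$ a longer proper Lyndon prefix of $\ell$. Your strong induction on $|\ell|$ just repackages the paper's iteration of the factorization until \cref{prop:lyndon_description} applies, and your separate base-case remark is unnecessary, since Case 1 uses no induction hypothesis and in Case 2 one has $2 \le |\ell_2| < |\ell|$.
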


\begin{proof}
    Write the standard factorization as $\ell=\ell_1L_1$, with $L_1\in\Lyn$.
    
    We shall use the following observation. 
    Let $W\in\Lyn$ have longest proper Lyndon prefix $\ell_1$, and write $W=\ell_1R$ with $R\in\Lyn$.
    If $R$ has $\ell_1$ as a prefix, then $\ell_1$ is the longest proper Lyndon prefix of $R$. 
    Indeed, any proper Lyndon prefix $q$ of $R$ with $|q|>|\ell_1|$ has $\ell_1$ as a strict prefix, hence $\ell_1\prec q$; therefore $\ell_1q\in\Lyn$, contradicting the maximality of $\ell_1$ in $W$.
    
    Starting with $L_0=\ell$, repeat the standard factorization $L_{i-1} = \ell_1 L_i$ as long as $L_i$ has $\ell_1$ as a prefix.
    The observation shows that the hypothesis that $\ell_1$ is the longest proper Lyndon prefix is preserved at each step.
    Since lengths strictly decrease, the process terminates.
    Thus, for some $m\geq 1$, $\ell = \ell_1^m L_m$ where $L_m\in\Lyn$, $L_m$ does not have $\ell_1$ as a prefix, and $\ell_1$ is the longest proper Lyndon prefix of $L_{m-1}=\ell_1L_m$.
    
    Applying \cref{prop:lyndon_description} to $L_{m-1}$ gives $L_m = \bar{\ell}_1 z$ for some possibly empty strict prefix $\bar{\ell}_1$ of $\ell_1$ and some $z\in X$. 
    Hence $\ell=\ell_1^m\bar{\ell}_1z$, as claimed.
\end{proof}

\begin{proof}[Proof of \cref{lem:lyndon}]
    Let $b\in\B\setminus X$, and let $\ell\in\Lyn$ be such that $b=\pi(\ell)$.
    Let $\ell_1$ be the longest proper Lyndon prefix of $\ell$.
    By \cref{cor:lyndon_power_prefix}, there exist $m\geq 1$, a possibly empty strict prefix $\bar{\ell}_1\in X^*$ of $\ell_1$, and a letter $z\in X$ such that $\ell = \ell_1^m \bar{\ell}_1 z$.
    By the definition of the Lyndon Hall basis, this corresponds to the decomposition $b = \ad_a^m(c)$, where $a=\pi(\ell_1)$ and $c=\pi(\bar{\ell}_1z)$.
    Therefore
    \begin{equation}
        |c| = |\bar{\ell}_1z| = |\bar{\ell}_1|+1 \leq |\ell_1| = |a|,
    \end{equation}
    because $\bar{\ell}_1$ is a strict prefix of $\ell_1$.
\end{proof}

\section*{Acknowledgments}

We warmly thank Karine Beauchard for many discussions on this problem.

The authors acknowledge support from ANR-11-LABX-0020 (Labex Lebesgue), as well as from the Fondation Simone et Cino Del Duca -- Institut de France.

The proofs of \cref{sec:univ-bound} have been obtained through discussions with generative AI models.

\bibliographystyle{plain}
\bibliography{control}

\end{document}